\newenvironment{enumalph}{\begin{enumerate}  }{\end{enumerate}}
\newtheorem{theorem}{Theorem}[subsection]
\newtheorem{proposition}[theorem]{Proposition}
\newtheorem{corollary}[theorem]{Corollary}
\newtheorem{lemma}[theorem]{Lemma}
\theoremstyle{definition}
\newtheorem{remark}[theorem]{Remark}
\DeclareMathOperator{\rank}{rank}
\DeclareMathOperator{\Ext}{Ext}
\DeclareMathOperator{\opH}{H}
\DeclareMathOperator{\ind}{ind}
\DeclareMathOperator{\Lie}{Lie}
\DeclareMathOperator{\rad}{Nilrad}
\newcommand{\calO}{\mathcal{O}}
\newcommand{\red}{\rm{red}}
\newcommand{\reg}{\rm{reg}}
\newcommand{\Maxspec}{\mbox{MaxSpec}}
\newcommand{\frakO}{\mathfrak{O}}
\newcommand{\frakb}{\mathfrak{b}}
\newcommand{\g}{\mathfrak{g}}
\newcommand{\fraku}{\mathfrak{u}}
\newcommand{\frakV}{\mathfrak{V}}
\newcommand{\frakw}{\mathfrak{w}}
\newcommand{\fraksl}{\mathfrak{sl}}
\newcommand{\Hbul}{\opH^\bullet}                                  
\newcommand{\N}{\mathcal{N}}
\begin{document}

\title[On nilpotent commuting varieties and cohomology of Frobenius kernels]{On nilpotent commuting varieties and cohomology of Frobenius kernels}

\author{Nham V. Ngo}
\thanks{The paper was partly supported by EPSRC grant EP/K022997/1}
\address{Department of Mathematics and Statistics \\ Lancaster University \\ Lancaster \\ LA1 4YW, UK}
\address{\text{New address:} Department of Mathematics \\ University of Arizona \\ Tucson \\ AZ 85721, USA}
\email{nhamngo@math.arizona.edu}

\maketitle

\begin{abstract}
The paper studies the dimensions of irreducible components of commuting varieties of (restricted) nilpotent $r$-tuples in a classical Lie algebra $\g$ defined over an algebraically closed field $k$. As applications, we obtain some new results on the structure of the (even) cohomology ring of Frobenius kernels $G_r$ for each $r\ge 1$, where $G$ is the simply connected, simple algebraic group such that $\Lie(G)=\g$. Explicit calculations for rank two groups are also presented. 
\end{abstract}

\section{Introduction}
\subsection{} Let $\g$ be a classical Lie algebra defined over an algebraically closed field $k$ of characteristic $p>0$. Denote $\N_1=\{x\in\g:x^{[p]}=0\}$, the restricted nullcone of $\g$. Note that $\N_1$ coincides with the nilpotent cone $\N$ of $\g$ when $p\ge h$, the Coxeter number of $\g$. In this paper, we study the dimension and related properties of the commuting variety
\[ C_r(\N_1)=\{(x_1,\ldots,x_r)\in\N_1^r: [x_i,x_j]=0, 1\le i,j\le r \}. \]
It is well known that for $r=2$ and $p\ge h$, such commuting variety is completely described by Premet in \cite{Pr:2003}. Explicitly, he showed that $C_2(\N)$ has pure dimension $\dim\g$, each irreducible component is of the form $\overline{G\cdot(x,z(x)\cap\N)}$ for some distinguished nilpotent element $x$, where $z(x)$ is the centralizer of $x$ in $\g$. In a part of the Ph.D. dissertation, the author proved that if $\g$ is either $\fraksl_2$ or $\fraksl_3$, then $C_r(\N)$ is irreducible for each $r\ge 1$ \cite{Ngo2:2013}. Recently, \v{S}ivic and the author studied the reducibility of $C_r(\N)$ for type $A$ and all $r\ge 1$ \cite{NS:2014}. Among other results, we proved that $C_r(\N)$ is reducible for $r\ge 4$ and $\rank(\g)\ge 3$. For $r=3$, it was shown to be irreducible for $\rank(\g)\le 5$. Very little is known about $C_r(\N_1)$ for $p<h$. In fact, one could only find in literature related results for $r=1$ \cite{CLNP:2003}\cite{UGA} or $r=2, p=2,\g=\fraksl_n$ \cite{L:2007}. 

Our motivation for studying such commuting varieties is to investigate cohomological properties of Frobenius kernels of $G$. The $r$-th Frobenius kernels $G_r$, for all $r\ge 1$, are infinitesimal subgroups of $G$ whose coordinate algebras are finite dimensional and local. These groups play a fundamental role in relating cohomology theory of finite groups to that of reductive group schemes \cite{Jan:2003}. However, cohomology theory for these objects are not well understood except few special cases. In the case $r=1$, the first Frobenius kernel $G_1$ is a familiar object and received considerable interest from representation theorists due to the equivalence between the category of $G_1$-modules and that of the restricted Lie algebra $(\g=\Lie(G), [p])$-modules, see for example \cite{Jan:2003}. For higher values of $r$, the problem on computing the cohomology of $G_r$ turns out to be complicated. Bendel, Nakano, and Pillen have made some progress in the two papers \cite{BNP:2004}\cite{BNP:2006} where they explicitly calculated the first and second degrees of $\opH^i(G_r,\opH^0(\lambda))$ with $\opH^0(\lambda)=\ind_B^G(\lambda)$ the induced module of the highest weight $\lambda$. In the special case when $G=SL_2$, the author computed $\opH^i(G_r,\opH^0(\lambda))$ for each $i, r\ge 1$ and dominant weight $\lambda$ \cite{Ngo:2013}. In general, no conjecture has been made. Geometrically, Suslin, Friedlander, and Bendel, demonstrated that the spectrum of the cohomology ring $\opH^{2\bullet}(G_r,k)$ can be identified with the commuting variety $C_r(\N_1)$ \cite{SFB:1997}. This groundbreaking result deduces the study of cohomology for Frobenius kernels to that of the variety $C_r(\N_1)$.  

\subsection{} The paper is structured as follows. We first prove that Premet's result does not hold for $C_r(\N)$ in general. In other words, we point out that for all $r$ larger than some constant depending on the type and rank of $\g$, the commuting variety $C_r(\N)$ is not equidimensional. Our main ingredient is the $G$-saturation variety $\frakV_r=G\cdot\frakw^r$ with $\frakw$ a fixed nilpotent commutative subalgebra of $\g$ defined at the beginning of Section \ref{w}. The dimension of $\frakV_r$ gives sharp lower bounds for those of $C_r(\fraku_1)$ and $C_r(\N_1)$. It also allows us to compute, for $\g$ of type $A$ and $C$, the dimension of the commuting variety over the square zero set $\frakO_2=\{x\in\g:(i(x))^2=0 \}$, where the inclusion $i:\g\hookrightarrow\mathfrak{gl}_n$ is the natural representation of $\g$ (defined in \ref{classical Lie algebras}). Consequently, we calculate the dimension of $C_r(\N_1)$ for $\g$ of rank 2.
  
The rest of the paper, as applications of the previous part, is devoted to explore the structure of the cohomology ring $\Hbul(G_r,k)$ and complexity of $G_r$-module $M$. In particular, we first show that if $G=SL_2$, then the graded commutative ring $\opH^\bullet(G_r,k)$ is Cohen-Macaulay for each $r\ge 1$. This result significantly strengthens the one in \cite{Ngo:2013} where the author proved that the commutative ring $\opH^{2\bullet}(G_r,k)_{\red}$ is Cohen-Macaulay. However, this special property can not be generalized for arbitrary $G$, as we show in the later part that $\Hbul(G_r,k)$ is, in general, not equidimensional. We are also able to provide a universal lower bound (depending only on $r$ and $\rank(\g)$) for the Krull dimension of $\Hbul(B_r,k)$ and $\Hbul(G_r,k)$ and then compute exactly this amount for $G$ of rank 2. Note that our last results are much stronger than the computations of Kaneda et. al. on the Krull dimension of $B_r$ cohomology ring for $SL_3$ \cite[4.7]{KSTY:1990}. Finally, we obtain some properties of the complexity, $c_{G_r}(M)$, of a module over $G_r$.  

\section{Notation}\label{notation}
\subsection{Representation theory}\label{classical Lie algebras} Let $k$ be an algebraically closed field of characteristic $p > 0$. Let $\g$ be a classical Lie algebra over $k$, i.e. $\g$ is of type $A, B, C,$ or $D$. Throughout the paper, we always assume $p$ is a good prime for $\g$, i.e. $p$ is arbitrary for type $A$ and it is greater than 2 for other types (see details for exceptional types in \cite[2.6]{Jan:2004}), unless otherwise stated. To be convenient for later usage, we give an explicit description of these classical Lie algebras as subalgebras of the general linear algebra $\mathfrak{gl}_n$ for some $n>0$ as follows. For type $A_\ell$, (i.e. $\fraksl_{\ell+1}$) it is exactly the space of traceless $(\ell+1)\times(\ell+1)$ matrices. For other types, our Lie algebras are defined by the same strategy as in \cite[1.2]{Hum:1978} but using different nondegenerate (skew)-symmetric bilinear forms. Indeed, let $J_\ell$ be the anti-identity $\ell\times\ell$ matrix (consisting of 1's on the anti diagonal and 0's elsewhere). It is easy to see that the forms
\[ \left(
\begin{array}{cc}
0 & J_\ell \\
-J_\ell & 0
\end{array}
\right) \quad, \quad
\left(
\begin{array}{cc}
0 & J_\ell \\
J_\ell & 0
\end{array}
\right)\quad,\quad
\left(
\begin{array}{ccc}
0 & 0 & J_\ell \\
0 & 1 & 0 \\
J_\ell & 0 & 0
\end{array}
\right)
\]
are symplectic and orthogonal bilinear forms of $\mathfrak{sp}_{2\ell},\mathfrak{so}_{2\ell}$ and $\mathfrak{so}_{2\ell+1}$. Now for each matrix $m$ in $\mathfrak{gl}_{\ell}$, denote $m^J$ the matrix reflected over the anti-diagonal. Similarly as in \cite[1.2]{Hum:1978}, $m^t$ is the transposed matrix of $m$. Finally classical Lie algebras (other than type $A$) can be defined as follows:

\begin{itemize}
\item Type $C_\ell$ or $D_{\ell}$ (i.e. $\mathfrak{sp}_{2\ell}$ or $\mathfrak{so}_{2\ell}$): is the space of matrices of the form \[\left(
\begin{array}{cc}
m_{11} & m_{12} \\
m_{21} & m_{22}
\end{array}
\right)\] satisfying $m_{ij}\in\mathfrak{gl}_{\ell}$, $m_{12},m_{21}$ are (skew) symmetric over the anti-diagonal, and $m_{11}=\pm m_{22}^J$. 
\item Type $B_\ell$ (i.e. $\mathfrak{so}_{2\ell+1}$) is the space of matrices of the form \[\left(
\begin{array}{ccc}
m_{11} & b_1 & m_{12} \\
c_1 & 0 & c_2 \\
m_{21} & b_2 & m_{22}
\end{array}
\right)\] where $m_{ij}\in\mathfrak{gl}_{\ell}$, $b_1,b_2$ and $c_1,c_2$ are column and row vectors in $k^\ell$ such that $m_{12},m_{21}$ are skew symmetric over the anti-diagonal, $m_{11}=-m_{22}^J, J_\ell b_1=-c_2^t$, and $J_\ell b_2=-c_1^t$. 
\end{itemize}

The above construction of classical linear Lie algebras implies an inclusion $i:\g\hookrightarrow\mathfrak{gl}_n$ for some $n>0$. Fix a Borel subalgebra $\frakb$ consisting of upper triangular matrices in $\g$, and a Cartan subalgebra $\mathfrak{t}$ consisting of diagonal matrices in $\g$. Now let $G$ be a simply connected, simple algebraic group over $k$, stabilizing the aforementioned bilinear forms, such that $\Lie(G)=\g$, (explicit definition of $G$ could be found in \cite[\S 1.2]{MT}). Fix a maximal torus $T$ of $G$, let $B \subset G$ be the Borel subgroup of $G$ containing $T$ satisfying $\Lie(B)=\frakb$ and $\Lie(T)=\mathfrak{t}$. Let $U \subset B$ be the unipotent radical of $B$, then $\Lie(U)=\fraku$, consisting of strictly upper triangular matrices. 
From now on, the symbol $\otimes$ means the tensor product over the field $k$, unless otherwise stated. Suppose $H$ is an algebraic group over $k$ and $M$ is a (rational) module of $H$. Denote by $M^H$ the submodule consisting of all the fixed points of $M$ under the $H$-action.

For each positive integer $r$, let $F_r:G\to G^{(r)}$ be the $r$-th Frobenius morphism, see for example \cite[I.9]{Jan:2003}. The scheme-theoretic kernel $G_r=\ker(F_r)$ is called the $r$-th Frobenius kernel of $G$. Given a closed subgroup (scheme) $H$ of $G$, write $H_r$ for the scheme-theoretic kernel of the restriction $F_r:H\to H^{(r)}$. In other words, we have
\[ H_r=H\cap G_r. \] 
Given a rational $G$-module $M$, write $M^{(r)}$ for the module obtained by twisting the structure map for $M$ by $F_r$. Note that $G_r$ acts trivially on $M^{(r)}$. Conversely, if $N$ is a $G$-module on which $G_r$ acts trivially, then there is a unique $G$-module $M$ with $N = M^{(r)}$. We denote the module $M$ by $N^{(-r)}$. Let $M$ be a $B$-module. Then the induced $G$-module can be defined as 
\[ \ind_B^GM=(k[G]\otimes M)^B. \]

\subsection{Geometry}
Let $R$ be a commutative Noetherian ring with identity. We use $R_{\red}$ to denote the reduced ring $R/\rad{R}$ where $\rad{R}$ is the radical ideal of $0$ in $R$, which consists of all nilpotent elements of $R$. Let $\Maxspec(R)$ be the spectrum of all maximal ideals of $R$. This set is a topological space under the Zariski topology. 
The notation $\dim(-)$ will be interchangeably used as the dimension of a variety or the Krull dimension of a ring. 

Let $\N$ be the nilpotent cone of $\g$, consisting of all nilpotent elements in $\g$. The adjoint action of $G$ on $\g$ stabilizes $\N$ and is denoted by ``$\cdot$". For each $x\in\N$, denote $\calO_x=G\cdot x$ the orbit of $x$ under the dot action of $G$. Let $x_{\reg}$ be a fixed regular nilpotent element and $z_{\reg}$ be its centralizer in $\g$. It is well known that $z_{\reg}\subset \N$, $\dim z_{\reg}=\rank\g=:\ell$, and the regular orbit $\calO_{\reg}=G\cdot x_{\reg}$ is dense in $\N$. 
The restricted nullcone $\N_1$ of $\g$ is defined as a subvariety of $\N$ satisfying
\[ x\in\N_1 ~~~\Leftrightarrow~~~x^{[p]}=0 \]
where $(-)^{[p]}$ is the $p$-power operation of the restricted Lie algebra $\g$. Since our classical Lie algebras could be embedded into $\mathfrak{gl}_n$ for some $n>0$, one may identify $x^{[p]}=i(x)^p$ for $x\in\g$. Hence, for $p\ge h$, the Coxeter number of $\g$, we have $\N_1=\N$, see for instance \cite[\S 1]{CLNP:2003}. Complete description of $\N_1$ is referred to the paper of Carlson, Lin, Nakano, and Parshall \cite{CLNP:2003}. Set $\fraku_1=\N_1\cap\fraku$. It follows that $\fraku_1=\fraku$ whenever $p\ge h$.

\section{Commuting varieties}\label{commuting vars} Suppose $V$ is a closed affine subvariety of $\g$. We define the commuting variety of $r$-tuples over $V$ as follows
\[ 
C_r(V)=\{ (x_1,...,x_r)\in V^r~|~[x_i,x_j]=0,~1\le i\le j\le r\}.
\]
We will just call it the commuting variety over $V$ for short. In case when $V=\N$ (or $\N_1$), we call $C_r(V)$ the (restricted) nilpotent commuting variety of $\g$. For more details of such varieties, one can refer to \cite{Ngo2:2013}. 

\subsection{Irreducible component associated to regular nilpotent elements} One has seen such an irreducible component in the case when $r=2$, see for example \cite{Pr:2003}. We show here that its generalized version for arbitrary $r$ is still an irreducible of $C_r(\N)$. Similarly, we point out an irreducible component for $C_r(\fraku)$. The dimension of these components gives some criterion for irreducibility and equidimensionality. 

\begin{proposition}\label{V_reg}
For each $r\ge 1$, the closed subvariety $V_{\reg}:=\overline{ G\cdot(x_{\reg}, z_{\reg},\ldots, z_{\reg}) }$ is an irreducible component of $C_r(\N)$ whose dimension is $\dim\N+(r-1)\ell$. 
\end{proposition}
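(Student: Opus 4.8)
The plan is to show three things: (1) $V_{\reg}$ is contained in $C_r(\N)$; (2) $\dim V_{\reg} = \dim\N + (r-1)\ell$; and (3) $V_{\reg}$ is not properly contained in any larger irreducible subset of $C_r(\N)$, i.e. it is a maximal irreducible subvariety.

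For (1), note first that $x_{\reg}$ is nilpotent and $z_{\reg}\subset\N$ by the facts recalled before the statement, so $(x_{\reg}, z_1,\ldots,z_{r-1})$ with $z_i\in z_{\reg}$ lies in $\N^r$. Since $z_{\reg}$ is abelian (the centralizer of a regular nilpotent element in a reductive Lie algebra is abelian) and each $z_i$ commutes with $x_{\reg}$ by definition of the centralizer, the tuple $(x_{\reg},z_1,\ldots,z_{r-1})$ is a commuting tuple. The $G$-action by simultaneous conjugation preserves the bracket relations and preserves $\N$, so $G\cdot(x_{\reg},z_{\reg},\ldots,z_{\reg})\subset C_r(\N)$; as $C_r(\N)$ is closed, so is its closure $V_{\reg}$, giving containment.

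For (2), I would use the orbit map. Consider the morphism $G\times z_{\reg}^{r-1}\to C_r(\N)$ sending $(g,z_1,\ldots,z_{r-1})$ to $g\cdot(x_{\reg},z_1,\ldots,z_{r-1})$; its image is $G\cdot(x_{\reg},z_{\reg},\ldots,z_{\reg})$, whose closure is $V_{\reg}$, so $\dim V_{\reg}$ equals the dimension of a general fibre's complement. The key point is to identify the fibre over $(x_{\reg},z_1,\ldots,z_{r-1})$ for generic $z_i$: it consists of pairs $(g, (z_i'))$ with $g\in Z_G(x_{\reg})$ (the stabilizer of $x_{\reg}$ in $G$) and $z_i' = g^{-1}\cdot z_i$. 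Hence the fibre has dimension $\dim Z_G(x_{\reg}) = \ell$ (the centralizer dimension of a regular nilpotent element equals the rank). Therefore $\dim V_{\reg} = \dim G + (r-1)\ell - \ell = (\dim G - \ell) + (r-1)\ell = \dim\N + (r-1)\ell$, using $\dim\N = \dim G - \ell = \dim\calO_{\reg}$.

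For (3), which I expect to be the main obstacle, I would argue that any irreducible closed $W\subset C_r(\N)$ containing $V_{\reg}$ must equal $V_{\reg}$. The natural approach is via the projection $\pi_1\colon C_r(\N)\to\N$ onto the first coordinate. Since $\calO_{\reg}$ is dense in $\N$ and $\pi_1(V_{\reg})\supset\calO_{\reg}$, a generic point of $W$ projects into $\calO_{\reg}$, so by $G$-equivariance it suffices to understand the fibre $\pi_1^{-1}(x_{\reg})\cap W$. That fibre lies inside $\{x_{\reg}\}\times C_{r-1}(z(x_{\reg})\cap\N)$, since anything commuting with the regular element $x_{\reg}$ lies in $z_{\reg} = z(x_{\reg})$, which is itself contained in $\N$ and is abelian; hence $C_{r-1}(z_{\reg}\cap\N) = z_{\reg}^{r-1}$, a vector space of dimension $(r-1)\ell$. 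A standard fibre-dimension count (using that over the dense open $\calO_{\reg}$ the map $\pi_1$ restricted to $V_{\reg}$ is a fibre bundle with fibre $z_{\reg}^{r-1}$, of total dimension $\dim\calO_{\reg} + (r-1)\ell = \dim V_{\reg}$) then forces $W\cap\pi_1^{-1}(\calO_{\reg}) = V_{\reg}\cap\pi_1^{-1}(\calO_{\reg})$, and taking closures gives $W = V_{\reg}$. The delicate point to get right is the semicontinuity/fibre-bundle argument showing no component of $C_r(\N)$ can have larger fibre over the generic point of $\N$ than $z_{\reg}^{r-1}$ — this is exactly where one uses that the regular orbit is the unique open orbit and that its centralizer is already contained in the nullcone, so no "extra room" exists for a bigger commuting tuple over a generic first coordinate.
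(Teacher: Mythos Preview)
Your proof is correct and uses the same key ingredients as the paper: the projection $\pi_1$ to the first coordinate and the identification of the fibre $\pi_1^{-1}(x_{\reg})\cap C_r(\N)$ with $\{x_{\reg}\}\times z_{\reg}^{r-1}$. However, your step (3) is more laborious than necessary. The paper observes directly that $\pi_1^{-1}(\calO_{\reg}) = G\cdot(x_{\reg}, z_{\reg},\ldots,z_{\reg})$ is \emph{open} in $C_r(\N)$ (being the preimage of the open orbit $\calO_{\reg}$ under a morphism) and irreducible (being the image of $G\times z_{\reg}^{r-1}$); the closure of a nonempty irreducible open subset of any variety is automatically an irreducible component, since a nonempty open of an irreducible closed set is dense in it. So no fibre-dimension count or semicontinuity is required: the fact that the fibre over $x_{\reg}$ is \emph{exactly} $z_{\reg}^{r-1}$ (not merely bounded above by it) already gives $\pi_1^{-1}(\calO_{\reg}) = G\cdot(x_{\reg}, z_{\reg}^{r-1})$ on the nose, and the rest is pure topology. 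Your dimension computation in (2) via the orbit map $G\times z_{\reg}^{r-1}\to V_{\reg}$ is a valid alternative to the paper's, which instead applies the fibre-dimension theorem to the restriction $\pi_1|_{V_{\reg}}:V_{\reg}\to\N$.
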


\begin{proof}
First note that $V_{\reg}$ is irreducible as it is the image of the surjective morphism 
\begin{align*}
m:G\times z_{\reg}^{r-1} &\to V_{\reg}\\
(g,x_1,\ldots,x_{r-1}) &\longmapsto g\cdot(x_{\reg},x_1,\ldots,x_{r-1})
\end{align*}
for all $g\in G$ and $x_i\in z_{\reg}$. Now consider the projection from $C_r(\N)$ to its first component
\begin{align*}
 \rho: C_r(\N) &\to \N,\\
(x_1,\ldots,x_r) &\mapsto x_1.
\end{align*}
Since the orbit $\calO_{\reg}$ is open in $\N$, so is its preimage $\rho^{-1}(\calO_{\reg})=G\cdot(x_{\reg},z_{\reg},\ldots, z_{\reg})$ (here we use the fact that $z_{\reg}$ is commutative). So the closure $\overline{\rho^{-1}(\calO_{\reg}})=V_{\reg}$ is an irreducible component of $C_r(\N)$.

Applying the theorem on the dimension of fibers to the restriction of $\rho: V_{\reg}\to \N$, we have
\[ \dim V_{\reg}=\dim\N+\dim \rho^{-1}(x_{\reg})=\dim\N+(r-1)\ell, \]
which completes our proof.
\end{proof}

From now on, we call $V_{\reg}$ the irreducible component of $C_r(\N)$ associated to regular nilpotent elements. Replacing $G$ by $B$ in the above argument, one obtains a similar result for $C_r(\fraku)$ as follows.
 
\begin{proposition}\label{U_reg}
For each $r\ge 1$, the closed subvariety $\overline{ B\cdot(x_{\reg}, z_{\reg},\ldots, z_{\reg}) }$ is an irreducible component of $C_r(\fraku)$ whose dimension is $\dim\fraku+(r-1)\ell$. 
\end{proposition}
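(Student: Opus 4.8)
The plan is to imitate the proof of Proposition \ref{V_reg} with $G$ replaced by $B$ throughout, the key point being that $\fraku$ plays the role that $\N$ played there. First I would note that since $\g$ consists of upper triangular-compatible matrices with $\frakb$ the upper triangular part and $\fraku$ the strictly upper triangular part, the adjoint action of $B$ stabilizes $\fraku$, and $x_{\reg}\in\fraku$ with $z_{\reg}\subset\fraku$ (the centralizer of a regular nilpotent is contained in $\fraku$ and is commutative of dimension $\ell$). Hence the morphism
\begin{align*}
m: B\times z_{\reg}^{r-1} &\to \overline{B\cdot(x_{\reg},z_{\reg},\ldots,z_{\reg})}\\
(b,x_1,\ldots,x_{r-1}) &\longmapsto b\cdot(x_{\reg},x_1,\ldots,x_{r-1})
\end{align*}
is well-defined, surjective, and has irreducible source, so its image (and its closure) is irreducible.

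Next I would consider the projection $\rho:C_r(\fraku)\to\fraku$ onto the first coordinate and argue that $\rho^{-1}(B\cdot x_{\reg})=B\cdot(x_{\reg},z_{\reg},\ldots,z_{\reg})$: any tuple $(x_1,\ldots,x_r)$ with $x_1=b\cdot x_{\reg}$ commuting with $x_1$ forces $x_2,\ldots,x_r\in b\cdot z_{\reg}$, using that the centralizer of $x_{\reg}$ in $\g$ is $z_{\reg}$ and these centralizing elements are automatically nilpotent (since $z_{\reg}\subset\N$, hence $b\cdot z_{\reg}\subset\fraku$ as well, being in $\N\cap\frakb$... here one should check $\N\cap\frakb=\fraku$, or more simply that $b\cdot z_{\reg}\subset\fraku$ directly since conjugation by $B$ preserves $\fraku$). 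The crucial claim is that $B\cdot x_{\reg}$ is open in $\fraku$. This follows because the regular nilpotent orbit under $B$ is open and dense in $\fraku$: indeed $\dim B\cdot x_{\reg}=\dim\frakb-\dim z_\frakb(x_{\reg})=\dim\frakb-\ell=\dim\fraku$ (using that the $B$-centralizer of $x_{\reg}$ has dimension $\ell$, the same as the $G$-centralizer since $z_{\reg}\subset\frakb$). Therefore $\rho^{-1}(B\cdot x_{\reg})$ is open in $C_r(\fraku)$, so its closure $\overline{B\cdot(x_{\reg},z_{\reg},\ldots,z_{\reg})}$ is an irreducible component.

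Finally, for the dimension, I would apply the fiber dimension theorem to the restriction $\rho:\overline{B\cdot(x_{\reg},z_{\reg},\ldots,z_{\reg})}\to\fraku$: the image is the closure of $B\cdot x_{\reg}$, which is all of $\fraku$, and the generic fiber $\rho^{-1}(x_{\reg})=\{x_{\reg}\}\times z_{\reg}^{r-1}$ has dimension $(r-1)\ell$, giving
\[ \dim\overline{B\cdot(x_{\reg},z_{\reg},\ldots,z_{\reg})}=\dim\fraku+(r-1)\ell. \]
I expect the main (though minor) obstacle to be the bookkeeping needed to confirm that elements of $\g$ commuting with $x_{\reg}$ and sitting over a point of $\fraku$ genuinely land back in $\fraku^r$ rather than merely in $\frakb^r$ or $\N^r$ — i.e., that the preimage description is exactly $B\cdot(x_{\reg},z_{\reg},\ldots,z_{\reg})$; this is handled by the observation that $B$-conjugation preserves $\fraku$ together with $z_{\reg}\subset\fraku$. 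Everything else is a verbatim transcription of the proof of Proposition \ref{V_reg}.
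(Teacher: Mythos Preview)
Your proposal is correct and is exactly what the paper does: the paper's proof is literally the single sentence ``Replacing $G$ by $B$ in the above argument, one obtains a similar result for $C_r(\fraku)$,'' and your write-up is precisely that replacement spelled out in detail. The only points requiring care---that $B\cdot x_{\reg}$ is open in $\fraku$ and that $z_{\reg}\subset\fraku$ so the preimage description holds---are the ones you already identified and handled.
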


An easy corollary immediately follows.

\begin{corollary}\label{irreducible or equidimensional}
For each $r\ge 1$, if the commuting variety $C_r(\N)$ (or $C_r(\fraku)$) is irreducible or equidimensional then its dimension is $\dim\N+(r-1)\ell$ (or $\dim\fraku+(r-1)\ell$).
\end{corollary}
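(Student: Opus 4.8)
The plan is to deduce this directly from Propositions \ref{V_reg} and \ref{U_reg}, so essentially no new geometric input is needed; the only thing to unwind is the meaning of ``irreducible'' and ``equidimensional'' for the algebraic set $C_r(\N)$ (resp.\ $C_r(\fraku)$). First I would recall that by Proposition \ref{V_reg} the subvariety $V_{\reg}$ is an irreducible component of $C_r(\N)$ of dimension exactly $\dim\N+(r-1)\ell$, and similarly, by Proposition \ref{U_reg}, $\overline{B\cdot(x_{\reg},z_{\reg},\ldots,z_{\reg})}$ is an irreducible component of $C_r(\fraku)$ of dimension $\dim\fraku+(r-1)\ell$.

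For the ``irreducible'' case: if $C_r(\N)$ is irreducible then it has a unique irreducible component, which therefore must coincide with the known component $V_{\reg}$; hence $C_r(\N)=V_{\reg}$ and $\dim C_r(\N)=\dim\N+(r-1)\ell$. For the ``equidimensional'' case: the dimension of a reducible variety is the maximum of the dimensions of its irreducible components, and equidimensionality means all these dimensions agree; since $V_{\reg}$ is one of the components and has dimension $\dim\N+(r-1)\ell$, every component has that dimension, so $\dim C_r(\N)=\dim\N+(r-1)\ell$. The argument for $C_r(\fraku)$ is verbatim the same with $V_{\reg}$ replaced by $\overline{B\cdot(x_{\reg},z_{\reg},\ldots,z_{\reg})}$ and $\dim\N$ replaced by $\dim\fraku$.

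There is no real obstacle here; the statement is a formal consequence of the two preceding propositions. The only point deserving a word of care is that one must know a priori that $V_{\reg}$ (resp.\ $\overline{B\cdot(x_{\reg},z_{\reg},\ldots,z_{\reg})}$) is genuinely a component, not merely an irreducible closed subset contained in a larger component --- but this is exactly the content of Propositions \ref{V_reg} and \ref{U_reg}, which were proved by exhibiting it as the closure of an open subset of $C_r(\N)$ (resp.\ $C_r(\fraku)$). Granting that, the corollary follows in two lines.
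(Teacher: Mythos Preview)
Your proposal is correct and matches the paper's approach: the paper gives no explicit proof, simply stating ``An easy corollary immediately follows,'' and your argument is exactly the intended two-line deduction from Propositions \ref{V_reg} and \ref{U_reg}.
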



\subsection{Dimension of commuting varieties}\label{w}
We introduce here a special square zero vector space which plays the main role in our investigations on the dimensions of commuting varieties. Our construction below is based on the definition of classical linear Lie algebras in the beginning of Section \ref{notation}. Explicitly, the space $\frakw$ is defined by matrices of the form
\begin{align}\label{form w}
 \left(
\begin{array}{cc}
0 & m \\
0 & 0
\end{array}\right)
\end{align}
where $m$ satisfies the following
\begin{itemize}
\item  Type $A_n$: If $n=2\ell$ then $m$ is an $\ell\times(\ell+1)$ matrix, otherwise, if $n=2\ell-1$, then $m\in\mathfrak{gl}_\ell$,
\item  Type $C_{\ell}$: $m\in\mathfrak{gl}_\ell$ and $m=m^J$,
\item  Type $B_{\ell}$ or $D_{\ell}$: $m\in\mathfrak{gl}_\ell$ and $m=-m^J$.
\end{itemize}
It is not hard to see that $\frakw$ is a nilpotent Lie subalgebra of $\g$ (as it is square zero). Moreover, it is commutative and 
\begin{align}\label{dimw}
\dim\frakw=
\begin{cases}
\ell^2~~&\text{if}~~\g=\fraksl_{2\ell},\\
\ell(\ell+1)~~&\text{if}~~\g=\fraksl_{2\ell+1},\\
\frac{\ell^2+\ell}{2}~~&\text{if}~~\g=\mathfrak{sp}_{2\ell},\\
\frac{\ell^2-\ell}{2}~~~~&\text{else}.\\
\end{cases}
\end{align}

\begin{remark}\label{P_w}
One could realizes $\frakw$ as the Lie algebra of a unipotent radical for some parabolic subgroup $P_{\frakw}$ of $G$. Explicitly, suppose $V$ is the natural representation of $\g$ with the basis $\{v_1,\ldots,v_n\}$ where $n=\ell+1,$ ($2\ell+1$, or $2\ell$) if $\g$ is of type $A_{\ell}$ ($B_{\ell}$, or $C_{\ell}, D_{\ell}$). Let $W$ be the subspace of $V$ generated by the first $\lfloor\frac{n}{2}\rfloor$ basis vectors $v_i's$. Under the bilinear forms defined in Section \ref{notation}, we have $0\subset W\subset V$ is a totally isotropic flag, so Proposition 12.13 in \cite{MT} states that the stabilizer of this flag in $G$ is a parabolic subgroup, denoted by $P_{\frakw}$. Simple calculations would show that $\frakw=\Lie(U)$ where $U$ is the unipotent radical of $P_{\frakw}$. (The reader should refer to Examples 12.4 and 17.9 in \cite{MT} for the details). 
\end{remark}

\begin{proposition}\label{C_r(u)_not_equidimensional}
The commuting variety $C_r(\fraku)$ is not equidimensional for 
\[
r>
\begin{cases}
2+\frac{1}{\ell-1}~~&\text{if}~~~\g=\fraksl_{2\ell}~\text{or}~\fraksl_{2\ell+1},\\
2~~&\text{if}~~~\g=\mathfrak{sp}_{2\ell},\\
2+\frac{2}{\ell-3}~~&\text{if}~~~\g=\mathfrak{so}_{2\ell},\\
2+\frac{4}{\ell-3}~~&\text{if}~~~\g=\mathfrak{so}_{2\ell+1}.\\
\end{cases}
\]
\end{proposition}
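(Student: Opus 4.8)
The plan is to exhibit, for $r$ above the stated bound, an irreducible closed subvariety of $C_r(\fraku)$ of dimension strictly larger than $\dim\fraku+(r-1)\rank\g$, which is the dimension of the component $\overline{B\cdot(x_{\reg},z_{\reg},\ldots,z_{\reg})}$ produced by Proposition~\ref{U_reg}; any irreducible component $Z$ containing such a subvariety then has $\dim Z>\dim\fraku+(r-1)\rank\g$, so $Z$ and $\overline{B\cdot(x_{\reg},z_{\reg},\ldots,z_{\reg})}$ are components of different dimension and $C_r(\fraku)$ fails to be equidimensional. The candidate is $\frakw^r$ itself. Every matrix of the shape \eqref{form w} is strictly upper triangular, so $\frakw\subseteq\fraku$, and $\frakw$ is commutative (as noted in Section~\ref{w}), so any $r$-tuple of elements of $\frakw$ commutes entrywise; hence $\frakw^r\subseteq C_r(\fraku)$. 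As a linear subspace of $\g^r$, $\frakw^r$ is closed and irreducible of dimension $r\dim\frakw$, so it lies in an irreducible component $Z$ of $C_r(\fraku)$ with $\dim Z\ge r\dim\frakw$. (Since $\frakw=\Lie(U)$ for the unipotent radical $U$ of the parabolic $P_\frakw\supseteq B$ of Remark~\ref{P_w}, $\frakw$ is already stable under the adjoint action of $B$, so $B\cdot\frakw^r=\frakw^r$ and there is no gain from $B$-saturation inside $\fraku$, in contrast with the $G$-saturation used for $C_r(\N)$.)

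Next I would compare dimensions. The inequality $r\dim\frakw>\dim\fraku+(r-1)\rank\g$ rearranges to $r(\dim\frakw-\rank\g)>\dim\fraku-\rank\g$, and since $\dim\frakw>\rank\g$ in each of the four families (for $\g=\fraksl_{2\ell}$ this reads $\ell^2>2\ell-1$, i.e.\ $(\ell-1)^2>0$, and the other types are analogous) it is equivalent to
\[ r>\frac{\dim\fraku-\rank\g}{\dim\frakw-\rank\g}. \]
It then suffices to substitute $\dim\frakw$ from \eqref{dimw}, the rank, and $\dim\fraku$ (the number of positive roots of $\g$) for each type. For instance, for $\g=\fraksl_{2\ell}$ one has $\rank\g=2\ell-1$, $\dim\fraku=\ell(2\ell-1)$ and $\dim\frakw=\ell^2$, whence
\[ \frac{\dim\fraku-\rank\g}{\dim\frakw-\rank\g}=\frac{(2\ell-1)(\ell-1)}{(\ell-1)^2}=\frac{2\ell-1}{\ell-1}=2+\frac{1}{\ell-1}, \]
and the cases $\fraksl_{2\ell+1}$, $\mathfrak{sp}_{2\ell}$, $\mathfrak{so}_{2\ell}$, $\mathfrak{so}_{2\ell+1}$ reduce to the same bookkeeping and yield the remaining three bounds. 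For $r$ exceeding the bound we then have $\dim Z\ge r\dim\frakw>\dim\fraku+(r-1)\rank\g$, so in particular $\frakw^r\not\subseteq\overline{B\cdot(x_{\reg},z_{\reg},\ldots,z_{\reg})}$, the two components are genuinely different, and $C_r(\fraku)$ is not equidimensional.

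I do not expect a serious obstacle: the geometric content — the commutative square-zero subalgebra $\frakw$ and the formula \eqref{dimw} for its dimension — is already in hand, and the rest is a single dimension count against Proposition~\ref{U_reg} followed by a per-type calculation. The only points requiring care are checking $\dim\frakw>\rank\g$ (so that dividing by $\dim\frakw-\rank\g$ preserves the inequality), which genuinely fails for the smallest ranks in types $B_\ell$ and $D_\ell$ — there the displayed quantity is $+\infty$ and imposes no condition, consistent with coincidences such as $\mathfrak{so}_6\cong\fraksl_4$ — and not conflating the two uses of the letter $\ell$ (the rank in Proposition~\ref{U_reg} versus the matrix-size parameter in \eqref{dimw}) when performing the substitution.
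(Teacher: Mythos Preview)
Your proposal is correct and follows essentially the same approach as the paper: exhibit $\frakw^r\subseteq C_r(\fraku)$ and show $r\dim\frakw>\dim\fraku+(r-1)\rank\g$ for the stated values of $r$, then invoke Proposition~\ref{U_reg} (the paper phrases the last step via Corollary~\ref{irreducible or equidimensional} instead, which is logically equivalent). The paper leaves the per-type dimension count as an ``easy computation'', whereas you carry it out explicitly and also flag the small-rank degeneracies in types $B$ and $D$; the parenthetical on $B$-stability of $\frakw$ is a nice observation but not needed for the argument.
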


\begin{proof}
As $\frakw$ is commutative, $\frakw^r$ is a subvariety of $C_r(\fraku)$. Easy computation shows that $\dim\frakw^r$ ($=r\dim\frakw$) is greater than $\dim\fraku+(r-1)\ell$ for all $r$ satisfying the hypothesis. Hence the result follows by Corollary \ref{irreducible or equidimensional}.
\end{proof}
Since $\frakw\subset \fraku_1$ for all $p\ge 2$, we further have $\frakw^r\subset C_r(\fraku_1)$. Thus, we obtain the following
\begin{corollary}\label{dim C_r(u_1)}
For each $r\ge 1$ and for all prime $p\ge 2$ (not necessarily good prime), we always have 
\[ \dim C_r(\fraku_1)\ge r\dim\frakw. \]
\end{corollary}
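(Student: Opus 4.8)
The statement to prove is Corollary \ref{dim C_r(u_1)}: for every $r \ge 1$ and every prime $p \ge 2$ (not necessarily good), one has $\dim C_r(\fraku_1) \ge r\dim\frakw$. The plan is to exhibit $\frakw^r$ as a closed subvariety of $C_r(\fraku_1)$ of dimension exactly $r\dim\frakw$, whence the bound follows from monotonicity of dimension under inclusion of closed subvarieties.

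The first step is to observe that $\frakw$, being a square-zero subspace (matrices of the block form \eqref{form w}), consists of nilpotent elements $x$ with $i(x)^2 = 0$ inside $\mathfrak{gl}_n$; in particular $i(x)^p = 0$ for every $p \ge 2$, so under the identification $x^{[p]} = i(x)^p$ we get $x^{[p]} = 0$, i.e. $\frakw \subseteq \N_1$. Since $\frakw$ also consists of strictly upper triangular matrices, $\frakw \subseteq \fraku$, hence $\frakw \subseteq \fraku \cap \N_1 = \fraku_1$. This is where the hypothesis on $p$ only being $\ge 2$ (rather than good, or $\ge h$) actually matters: we never need $\fraku_1 = \fraku$, we only need $\frakw$ itself to land in the restricted nullcone, and that is automatic from squaring to zero.

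The second step is that $\frakw$ is commutative — already noted in the text, and immediate since the product of any two matrices of the form $\left(\begin{smallmatrix} 0 & m \\ 0 & 0\end{smallmatrix}\right)$ is zero. Therefore any $r$-tuple $(x_1,\dots,x_r) \in \frakw^r$ satisfies $[x_i,x_j] = 0$ for all $i,j$, so $\frakw^r \subseteq C_r(\fraku_1)$; and $\frakw^r$ is closed in $\fraku_1^r$, being a product of linear subspaces. Since $\frakw$ is a vector space, $\dim\frakw^r = r\dim\frakw$. Combining, $r\dim\frakw = \dim\frakw^r \le \dim C_r(\fraku_1)$, which is exactly the asserted inequality.

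There is essentially no obstacle here: the corollary is a direct packaging of Proposition \ref{C_r(u)_not_equidimensional}'s key input together with the observation $\frakw \subseteq \fraku_1$. The only point requiring a word of care is the verification that $\frakw \subseteq \N_1$ in \emph{all} characteristics $p \ge 2$, which, as explained above, follows from the square-zero property of $\frakw$ and the description $x^{[p]} = i(x)^p$ given in Section \ref{notation}; for $p = 2$ this is sharp and for $p > 2$ it is a fortiori true. One could also phrase the whole argument in one line by citing Remark \ref{P_w}: $\frakw = \Lie(U)$ for $U$ the unipotent radical of the parabolic $P_{\frakw}$, and the Lie algebra of a unipotent radical of a parabolic always lies in $\N_1$ since it is spanned by root vectors (equivalently, it is $p$-nilpotent); but the direct square-zero argument is cleaner and self-contained.
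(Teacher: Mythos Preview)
Your proof is correct and is exactly the paper's approach: the text immediately preceding the corollary reads ``Since $\frakw\subset \fraku_1$ for all $p\ge 2$, we further have $\frakw^r\subset C_r(\fraku_1)$,'' and your argument simply unpacks this one line. One small caveat on your closing aside: it is \emph{not} true in general that the Lie algebra of the unipotent radical of a parabolic lies in $\N_1$ for all $p$ (for instance, the full nilradical $\fraku$ of the Borel in $\fraksl_3$ contains $e_{12}+e_{23}$, whose square is nonzero, so it is not in $\N_1$ when $p=2$); the square-zero property of this particular $\frakw$ is genuinely what does the work, as you rightly emphasize in preferring the direct argument.
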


Before getting similar results for $C_r(\N)$, we set $\frakV_r=G\cdot\frakw^r$ for each $r\ge 1$. It's easy to see that $\frakV_r$ is a closed subvariety of $C_r(\N)$, see for example \cite[\S 8.7]{Jan:2004}. We now compute the dimension of $\frakV_r$.

\begin{proposition}\label{dimV_reg}
For each $r\ge 1$, one has
\[\dim\frakV_r=(r+1)\dim\frakw=
\begin{cases}
(r+1)\ell^2~~&\text{if}~~\g=\fraksl_{2\ell},\\
(r+1)\ell(\ell+1)~~&\text{if}~~\g=\fraksl_{2\ell+1},\\
(r+1)\frac{\ell^2+\ell}{2}~~&\text{if}~~\g=\mathfrak{sp}_{2\ell},\\
(r+1)\frac{\ell^2-\ell}{2}~~~~&\text{else}.\\
\end{cases}
\] 
\end{proposition}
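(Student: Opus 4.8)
The plan is to compute $\dim \frakV_r = \dim G\cdot \frakw^r$ via the orbit map
\[
\mu\colon G\times \frakw^r \longrightarrow \frakV_r, \qquad (g, \underline{x})\longmapsto g\cdot\underline{x},
\]
and the theorem on dimension of fibers. Since $\frakV_r$ is irreducible (it is the image of the irreducible variety $G\times\frakw^r$) and $\mu$ is dominant, we have $\dim\frakV_r = \dim G + r\dim\frakw - \dim F$, where $F$ is a generic fiber. So the real content is to identify a generic fiber of $\mu$, or equivalently to understand, for a generic tuple $\underline{x}=(x_1,\dots,x_r)\in\frakw^r$, the stabilizer
\[
G_{\underline x} = \{g\in G : g\cdot x_i = x_i \text{ for all } i\} = \bigcap_{i=1}^r G_{x_i},
\]
because the fiber of $\mu$ over $\mu(1,\underline x)$ is $\{(g,g^{-1}\cdot\underline x): g\in G\}\cong G$ set-theoretically but the fiber as a variety has dimension $\dim G - \dim(\text{orbit }G\cdot\underline x)$... more precisely $\dim\mu^{-1}(y) = \dim G - \dim \overline{G\cdot\underline x}$ for generic $y$, so $\dim\frakV_r = \dim G\cdot\underline x$ for generic $\underline x\in\frakw^r$, and this equals $\dim G - \dim G_{\underline x} + (r\dim\frakw - \dim B_{\frakw}\cdot\underline x$-type corrections); cleaner is the two-step fibration below.

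Concretely I would factor through the parabolic $P=P_{\frakw}$ of Remark \ref{P_w}, for which $\frakw = \Lie(U_P)$ with $U_P$ the unipotent radical. First, the conjugation action of $P$ on $\frakw$ (hence diagonally on $\frakw^r$) has the property that, for a generic tuple $\underline x\in\frakw^r$, the stabilizer $P_{\underline x}$ contains $U_P$ (since $U_P$ is abelian here, as $\frakw$ is, and $U_P$ acts trivially on $\frakw=\Lie U_P$ by adjointness)... actually one checks $U_P$ acts trivially on $\frakw$ because $[\frakw,\frakw]=0$, so $P$ acts on $\frakw^r$ through the Levi $L_P$. Then $\dim G\cdot\frakw^r = \dim(G\times^P \frakw^r) = \dim(G/P) + \dim\frakw^r = \dim(G/P) + r\dim\frakw$, PROVIDED the map $G\times^P\frakw^r\to\frakV_r$ is generically finite, i.e. that for generic $\underline x$ the set of $gP$ with $g\cdot\underline x\in\frakw^r$ is finite (equivalently, that $L_P$ acts generically with finite stabilizer on $\frakw^r$ modulo the relevant symmetry). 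Since $\dim(G/P) = \dim\g - \dim\frakp = \dim\frakw$ (as $\dim\frakp = \dim\g - \dim\frakw$ because $\frakw$ is "half" of the nilradical complement — indeed $\frakp = \Lie L_P \oplus \frakw$ and $\dim\g = \dim L_P + 2\dim\frakw$), we would get $\dim\frakV_r = \dim\frakw + r\dim\frakw = (r+1)\dim\frakw$, and then the four-case formula follows by plugging in \eqref{dimw}.

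The main obstacle is the generic-finiteness claim: showing that for a generic $\underline x\in\frakw^r$ the fiber of $G\times^P\frakw^r\to\frakV_r$ is finite, equivalently that a generic tuple in $\frakw^r$ is not $G$-conjugate to another point of $\frakw^r$ in positive-dimensional families outside the $L_P$-action. For $r=1$ this is the statement that $G\cdot\frakw$ is the closure of a single nilpotent orbit (a Richardson orbit) of dimension $2\dim\frakw$, which is classical; the Richardson orbit meets $\frakw$ in a dense open set, and its $G$-stabilizer has dimension $\dim\g - 2\dim\frakw = \dim L_P - \dim\frakw = \dim P - 2\dim\frakw+\dim\frakw$... giving exactly the count above. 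For $r\ge 2$ I would argue that adding more commuting coordinates from $\frakw$ can only shrink stabilizers, so $\dim G_{\underline x}\le \dim G_{x_1} = \dim\g - 2\dim\frakw$ for generic $x_1$, forcing $\dim G\cdot\underline x \ge 2\dim\frakw + (r-1)\dim\frakw$... wait, that overshoots; the correct bookkeeping is that $\dim G\cdot\underline x = \dim G - \dim G_{\underline x}$ and $\dim G_{\underline x}\ge \dim\frakw$ always (the subgroup $U_P$ — no, rather the part of $L_P$ plus $U_P$ fixing $\underline x$) — here I would pin down that $G_{\underline x}\supseteq U_P\cdot(L_P)_{\underline x}$ with $(L_P)_{\underline x}$ finite for generic $\underline x$ and $\dim U_P = \dim\frakw$, giving $\dim G_{\underline x}=\dim\frakw$ exactly and hence $\dim\frakV_r = \dim G - \dim\frakw = \dim\frakp - \dim\frakw + \dim\frakw = \dim\g - \dim\frakw = (r+1)\dim\frakw$ after using $\dim\g = \dim L_P + 2\dim\frakw$ and $\dim\frakp = \dim L_P + \dim\frakw$. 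The delicate point throughout is verifying $(L_P)_{\underline x}$ is finite for generic $\underline x\in\frakw^r$; I would handle the four types separately using the explicit matrix descriptions of $\frakw$ from \eqref{form w}, reducing in each case to a statement about the generic stabilizer of the $\mathrm{GL}_\ell$- (or relevant classical group) action on a space of rectangular/(anti-)symmetric matrices, which one checks directly.
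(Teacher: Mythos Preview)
Your upper bound $\dim\frakV_r\le\dim(G/P_{\frakw})+r\dim\frakw=(r+1)\dim\frakw$ via the collapsing map $G\times^{P_{\frakw}}\frakw^r\to\frakV_r$ is exactly what the paper does. The trouble is entirely in your lower bound, where there are two genuine errors. First, the claim that $(L_P)_{\underline x}$ is finite for generic $\underline x\in\frakw^r$ is false for small $r$: already for $\g=\fraksl_{2\ell}$ and $r=1$, the Levi $S(GL_\ell\times GL_\ell)$ acts on $\frakw\cong M_\ell(k)$ by $(A,B)\cdot x=AxB^{-1}$, and the stabilizer of an invertible $x$ is a copy of $GL_\ell$; for $r=2$ one still gets a torus. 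Second, and more seriously, even where that claim holds it does not yield what you want: from $\dim G_{\underline x}=\dim\frakw$ you would get $\dim G\cdot\underline x=\dim G-\dim\frakw=\dim L_P+\dim\frakw$, a constant independent of $r$, not $(r+1)\dim\frakw$. You have conflated $\dim\frakV_r$ with the dimension of a single generic $G$-orbit inside it, but $\frakV_r$ is a positive-dimensional family of such orbits. What the collapsing-map approach actually requires is that the fiber of $G\times^{P_{\frakw}}\frakw^r\to\frakV_r$ over a generic point---the set of cosets $gP_{\frakw}$ with $g^{-1}\cdot\underline x\in\frakw^r$---be finite, and this is unrelated to $(L_P)_{\underline x}$.

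The paper sidesteps all of this. For the lower bound it picks an explicit $x_{\frakw}\in\frakw$ of the form \eqref{form w} with $m$ an (anti)diagonal of $\pm1$'s, records its partition $[2^s,1^t]$ case by case, and cites the orbit-dimension formula in \cite{CM:1992} to verify $\dim\calO_{x_{\frakw}}\ge 2\dim\frakw$. Then the subvariety $G\cdot(x_{\frakw},\frakw^{r-1})\subseteq\frakV_r$ has dimension $\dim\calO_{x_{\frakw}}+(r-1)\dim\frakw\ge(r+1)\dim\frakw$, matching the upper bound. Your route can in fact be repaired---Richardson's dense-orbit theorem makes $G\times^{P_{\frakw}}\frakw\to G\cdot\frakw$ generically finite for $r=1$, and for general $r$ a generic tuple has Richardson first coordinate, forcing the collapsing fiber to be finite---but the detour through $(L_P)_{\underline x}$ is a dead end.
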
 

\begin{proof}
By \eqref{dimw}, it suffices to prove the first equality. From the Remark \ref{P_w}, we have the moment map $\mathfrak{m}: G\times^{P_{\frakw}}\frakw^r\to\frakV_r$. It follows that 
\[ \dim\frakV_r\le\dim\left(G/P_{\frakw}\right)+r\dim\frakw=(r+1)\dim\frakw. \] 
On the other hand, let $x_{\frakw}$ be an element in $\frakw$ which also belongs to a maximal orbit in $\N$ intersecting with $\frakw$. (The best candidate for such $x_{\frakw}$ is the matrix form \eqref{form w} where $m$ is a diagonal matrix consisting of $1$'s or $-1$'s.) In particular, the corresponding partition of $x_{\frakw}$ is $[2^s,1^t]$ where $s$ and $t$ satisfy the following condition
\begin{itemize}
\item Type $A_{\ell}$: $2s+t=\ell+1$ with $t=0$, or $1$.
\item Type $C_{\ell}$: $s=\ell$ and $t=0$.
\item Type $B_{\ell}$: $2s+t=2\ell+1$ with $s$ even and $t=1$, or $3$.
\item Type $D_{\ell}$: $2s+t=2\ell$ with $s$ even and $t=0$, or $2$.
\end{itemize}
Using the Corollary 6.1.4 in \cite{CM:1992}, one easily verifies that $\dim\calO_{x_{\frakw}}\ge 2\dim\frakw$. So
\[ \dim\frakV_r\ge\dim \left(G\cdot(x_{\frakw},\frakw^{r-1})\right)=\dim \left(G\cdot x_{\frakw}\right)+(r-1)\dim\frakw\ge (r+1)\dim\frakw.\]
Finally, we have obtained the equality.
\end{proof}
Using the same argument as for $C_r(\fraku)$ and $C_r(\fraku_1)$, one easily obtains the below properties.  
\begin{corollary}\label{C_r(N)_not_equidimensional}
The nilpotent commuting variety $C_r(\N)$ is not equidimensional for 
\[
r>
\begin{cases}
3~~&\text{if}~~~\g=\fraksl_{2\ell+1},\\
3+\frac{2}{\ell-1}~~&\text{if}~~~\g=\fraksl_{2\ell},\\
3+\frac{4}{\ell-1}~~&\text{if}~~~\g=\mathfrak{sp}_{2\ell},\\
3+\frac{4}{\ell-3}~~&\text{if}~~~\g=\mathfrak{so}_{2\ell},\\
3+\frac{8}{\ell-3}~~&\text{if}~~~\g=\mathfrak{so}_{2\ell+1}.\\
\end{cases}
\]
\end{corollary}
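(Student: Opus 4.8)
The plan is to mimic the proof of Proposition \ref{C_r(u)_not_equidimensional}, replacing the role of $\fraku$ by $\N$, the role of $\frakw^r$ by $\frakV_r = G\cdot\frakw^r$, and the role of Corollary \ref{irreducible or equidimensional} applied to $C_r(\fraku)$ by the same corollary applied to $C_r(\N)$. Concretely, $\frakV_r$ is a closed irreducible subvariety of $C_r(\N)$, and by Corollary \ref{irreducible or equidimensional}, if $C_r(\N)$ were equidimensional (or irreducible) then every irreducible component—hence in particular any component containing $\frakV_r$—would have dimension exactly $\dim\N + (r-1)\ell$. So it suffices to exhibit, for the stated ranges of $r$, a strict inequality $\dim\frakV_r > \dim\N + (r-1)\ell$, since that forces $\frakV_r$ to be contained in a component of strictly larger dimension than $V_{\reg}$, contradicting equidimensionality.

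The first step is to recall $\dim\frakV_r = (r+1)\dim\frakw$ from Proposition \ref{dimV_reg}, and to substitute the type-by-type values of $\dim\frakw$ from \eqref{dimw} together with the standard values of $\dim\N = \dim\g - \ell$: namely $\dim\fraksl_{n} - \ell$ for type $A$, and $\dim\mathfrak{sp}_{2\ell} - \ell$, $\dim\mathfrak{so}_{2\ell} - \ell$, $\dim\mathfrak{so}_{2\ell+1}-\ell$ for the other types (equivalently $\dim\N = \dim\g - \rank\g$ in each case, since $\N$ is the closure of the dense regular orbit). The second step is then purely the bookkeeping of solving the inequality $(r+1)\dim\frakw > \dim\N + (r-1)\ell$ for $r$ in each of the four (really five, splitting $\fraksl$ into even and odd rank) cases; this is a linear inequality in $r$ once $\ell$ is fixed, and rearranging gives $r(\dim\frakw - \ell) > \dim\N - \ell - \dim\frakw$, i.e. $r > \dfrac{\dim\N - \ell - \dim\frakw}{\dim\frakw - \ell}$ whenever $\dim\frakw > \ell$, which one checks holds in all the relevant rank regimes. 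Plugging in, for $\g = \fraksl_{2\ell+1}$ one gets the bound $3$; for $\fraksl_{2\ell}$ one gets $3 + \frac{2}{\ell-1}$; for $\mathfrak{sp}_{2\ell}$ one gets $3 + \frac{4}{\ell-1}$; for $\mathfrak{so}_{2\ell}$ one gets $3 + \frac{4}{\ell-3}$; and for $\mathfrak{so}_{2\ell+1}$ one gets $3 + \frac{8}{\ell-3}$, matching the statement.

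There is essentially no conceptual obstacle here—the proposition is a direct corollary of the machinery already assembled, and the phrase ``Using the same argument as for $C_r(\fraku)$ and $C_r(\fraku_1)$'' in the excerpt signals exactly this. The only point requiring a modicum of care is the arithmetic: one must verify that the denominators $\dim\frakw - \ell$ are positive in the stated rank ranges (so that dividing preserves the inequality direction), and that the low-rank cases where $\dim\frakw \le \ell$ (e.g. very small $\ell$, or the $\mathfrak{so}$ cases with $\ell \le 3$) are harmless because there the bound on $r$ is vacuous or the denominator is handled by the convention that the fraction is ``$+\infty$''. I would also double-check that $\frakV_r \subseteq C_r(\N)$ is indeed closed—this is asserted just before Proposition \ref{dimV_reg} with a reference to \cite[\S 8.7]{Jan:2004}, since $\frakw^r$ is a closed $P_{\frakw}$-stable subvariety and $G/P_{\frakw}$ is complete—so that it genuinely lies in a component and the equidimensionality criterion applies. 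Beyond that, the proof is a one-line invocation of Proposition \ref{dimV_reg} and Corollary \ref{irreducible or equidimensional} followed by the case-by-case inequality.
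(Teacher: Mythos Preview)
Your proposal is correct and follows exactly the paper's approach: the paper's proof is the single sentence ``It is not hard to see that for these values of $r$, $\dim\frakV_r>\dim V_{\reg}$, so the result follows by Corollary \ref{irreducible or equidimensional},'' and you have simply unpacked the arithmetic behind that comparison and added the (correct) remarks on closedness of $\frakV_r$ and positivity of $\dim\frakw-\ell$.
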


\begin{proof}
It is not hard to see that for these values of $r$, $\dim\frakV_r>\dim V_{\reg}$, so the result follows by Corollary \ref{irreducible or equidimensional}.
\end{proof}
\begin{remark}
The above result shows that Premet's result on equidimensionality of $C_2(\N)$ \cite{Pr:2003} is not valid for $r\ge 3$. Our result also implies that the structure of $C_r(\N)$ could be very complicated when $r$ is large. Hence, the task of describing irreducible components of these varieties becomes challenging. 
\end{remark}
We next obtain a lower bound for the dimension of $C_r(\N_1)$. This bound depends on $r, \ell$, and the type of $\g$, not depend on $p$. Recall that we have been assuming that $p$ is a good prime for $G$.
\begin{corollary}\label{lower bound for C_r(N_1)}
For each $r\ge 1$, one has
\[\dim C_r(\N_1)\ge\dim\frakV_r=
\begin{cases}
(r+1)\ell^2~~&\text{if}~~\g=\fraksl_{2\ell},\\
(r+1)\ell(\ell+1)~~&\text{if}~~\g=\fraksl_{2\ell+1},\\
(r+1)\frac{\ell^2+\ell}{2}~~&\text{if}~~\g=\mathfrak{sp}_{2\ell},\\
(r+1)\frac{\ell^2-\ell}{2}~~~~&\text{else}.\\
\end{cases}
\] 
\end{corollary}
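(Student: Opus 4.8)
The plan is to deduce this corollary directly from the inclusion $\frakV_r\subseteq C_r(\N_1)$ together with the dimension computation in Proposition \ref{dimV_reg}. The key observation is that the square-zero vector space $\frakw$ lies inside $\N_1$: every element of $\frakw$ has the matrix form \eqref{form w}, hence squares to zero in $\mathfrak{gl}_n$, so its $[p]$-power (identified with the $p$-th matrix power) vanishes for every prime $p\ge 2$. Since $\frakw$ is commutative, the $r$-fold product $\frakw^r$ is a subvariety of $C_r(\fraku_1)\subseteq C_r(\N_1)$. The restricted nullcone $\N_1$ is $G$-stable under the adjoint action (as $(g\cdot x)^{[p]}=g\cdot x^{[p]}$), and commuting conditions are preserved by the simultaneous $G$-action, so $\frakV_r=G\cdot\frakw^r$ is contained in $C_r(\N_1)$ as well.

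First I would record that $\frakV_r\subseteq C_r(\N_1)$, citing the stability of $\N_1$ under $G$ and the fact that $\frakw\subseteq\N_1$. Next I would invoke Proposition \ref{dimV_reg}, which gives $\dim\frakV_r=(r+1)\dim\frakw$ with the explicit values of $\dim\frakw$ from \eqref{dimw}. Since dimension is monotone under closed inclusions of varieties, $\dim C_r(\N_1)\ge\dim\frakV_r$, and substituting the four cases yields exactly the displayed formula. One should note explicitly (as the statement already does) that although $\frakw\subseteq\N_1$ holds for all $p\ge 2$, the identification of the relevant orbit partitions in the proof of Proposition \ref{dimV_reg} uses the good-prime hypothesis, which is why the corollary is stated under that standing assumption; alternatively one could remark that the weaker bound $\dim C_r(\N_1)\ge \dim C_r(\fraku_1)\ge r\dim\frakw$ of Corollary \ref{dim C_r(u_1)} needs no hypothesis on $p$ at all.

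There is essentially no obstacle here: the corollary is a one-line consequence of the preceding proposition once the containment $\frakV_r\subseteq C_r(\N_1)$ is noted. The only point requiring a moment's care is verifying that $\frakw\subseteq\N_1$ for the relevant $p$ — but this is immediate from the square-zero property of the matrices \eqref{form w}, since $x^{[p]}=i(x)^p=0$ whenever $i(x)^2=0$ and $p\ge 2$. I would therefore write the proof in a single short paragraph, remarking that it follows from Proposition \ref{dimV_reg} and the chain of inclusions $\frakw^r\subseteq\frakV_r\subseteq C_r(\N_1)$.
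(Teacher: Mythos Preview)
Your proposal is correct and follows exactly the paper's approach: the paper's proof simply observes that $\frakw$ being square zero forces $\frakw\subseteq\N_1$, hence $\frakV_r\subseteq C_r(\N_1)$, and the inequality then follows from Proposition~\ref{dimV_reg}. Your additional remarks about $G$-stability of $\N_1$ and the role of the good-prime hypothesis are accurate elaborations, but the underlying argument is identical.
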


\begin{proof}
The fact that $\frakw$ is square zero implies that it is always contained in $\N_1$, and so $\frakV_r\subset C_r(\N_1)$. Therefore, the inequality follows.
\end{proof}

\subsection{} Recall from the Section \ref{classical Lie algebras} that we have the embedding $i:\g\hookrightarrow\mathfrak{gl}_n$ for some $n>0$. Now let $\frakO_2=\{x\in\g:(i(x))^2=0\}$. We next compute the dimensions of $C_r(\frakO_2\cap\fraku)$ and $C_r(\frakO_2)$. This deduces the dimensions of $C_r(\fraku_1)$ and $C_r(\N_1)$ when $\g$ is of type $A$ and $p=2$. We assume for the rest of this section that $\g$ is of type $A$ or $C$. 

Suppose $\calO_2$ is the maximal orbit in $\frakO_2$, i.e. $\frakO_2=\overline{\calO_2}$. (Note that such orbit is not unique if $\g$ is of type $D$). In fact, $\calO_2=\calO_{x_{\frakw}}$ defined in the proof of Proposition \ref{dimV_reg}. Hence, we recall that the partition of $\calO_2$ is of the form $[2^s,1^t]$ for some non-negative integers $s, t$. In particular, their values are following

\begin{itemize}
\item $\g=\fraksl_n$: $t=1$ if $n$ is odd, otherwise $t=0$; hence $s=\frac{n-t}{2}$,
\item $\g=\mathfrak{sp}_{2\ell}$: $t=0$ and $s=\ell$.
\end{itemize}
We review the dimension of $\calO_2$.

\begin{lemma}
We have
\[ \dim\frakO_2=\dim\calO_2=
\begin{cases}
\frac{n^2-t^2}{2}~~~&\text{if}~~\g=\fraksl_n,\\
\ell^2+\ell~~~&\text{if}~~\g=\mathfrak{sp}_{2\ell}.
\end{cases}
\]
\end{lemma}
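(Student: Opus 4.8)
The plan is to reduce everything to the classical combinatorial formula for the dimension of a nilpotent orbit of prescribed Jordan type. The first equality is essentially free: in types $A$ and $C$ the maximal orbit in $\frakO_2$ is unique, so $\calO_2$ is a single $G$-orbit, hence irreducible, and $\frakO_2=\overline{\calO_2}$ by the choice of $\calO_2$; therefore $\dim\frakO_2=\dim\overline{\calO_2}=\dim\calO_2$. It then remains to compute $\dim\calO_2=\dim\calO_{x_{\frakw}}$, and by the proof of Proposition~\ref{dimV_reg} (together with the explicit values of $s,t$ recalled just above) this is the orbit of a nilpotent with partition $\lambda=[2^s,1^t]$, where $2s+t=n$ and $t\in\{0,1\}$ for $\g=\fraksl_n$, and $s=\ell$, $t=0$ for $\g=\mathfrak{sp}_{2\ell}$.

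Next I would record the transpose partition $\lambda^{\mathrm t}=(s+t,\,s)$ and invoke Corollary~6.1.4 of \cite{CM:1992} — the same dimension formula already used in Proposition~\ref{dimV_reg}. For $\g=\fraksl_n$ the orbit of Jordan type $\lambda$ coincides with the corresponding $\mathfrak{gl}_n$-orbit (nilpotent elements are traceless and scalar matrices act trivially by conjugation), and its dimension is $n^2-\sum_j(\lambda^{\mathrm t}_j)^2=n^2-(s+t)^2-s^2$. For $\g=\mathfrak{sp}_{2\ell}$ the centralizer of such an element has dimension $\tfrac12\big(\sum_j(\lambda^{\mathrm t}_j)^2+\#\{j:\lambda_j\text{ odd}\}\big)$, so the orbit has dimension $\dim\mathfrak{sp}_{2\ell}$ minus this quantity.

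The remaining work is pure arithmetic. In type $A$, substituting $n=2s+t$ and using $t^2=t$ (since $t\in\{0,1\}$), one computes
\[
\dim\calO_2=(2s+t)^2-(s+t)^2-s^2=2s^2+2st=\frac{(2s+t)^2-t^2}{2}=\frac{n^2-t^2}{2}.
\]
In type $C$ the partition $[2^\ell]$ has no odd parts and $\lambda^{\mathrm t}=(\ell,\ell)$, so the centralizer has dimension $\tfrac12(2\ell^2)=\ell^2$; since $\dim\mathfrak{sp}_{2\ell}=2\ell^2+\ell$, we obtain $\dim\calO_2=(2\ell^2+\ell)-\ell^2=\ell^2+\ell$, as claimed.

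I do not expect a genuine obstacle here: the statement is a bookkeeping exercise. The only points demanding a little care are matching the exact normalization of the orbit-dimension formula in \cite{CM:1992} — in particular the odd-part correction term that separates types $B$, $C$, $D$ from type $A$ — and the innocuous identification of $SL_n$- and $GL_n$-orbits used in the type $A$ computation.
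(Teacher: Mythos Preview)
Your proposal is correct and follows exactly the same route as the paper: the paper's proof is simply the one-line ``It easily follows from \cite[Corollary 6.1.4]{CM:1992},'' and you have merely spelled out the partition data and the arithmetic behind that citation. There is nothing to add.
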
 

\begin{proof}
It easily follows from \cite[Corollary 6.1.4]{CM:1992}.
\end{proof}

We first compute the dimension of $C_r(\frakO_2\cap\fraku)$.
\begin{lemma}\label{C_r(O_2capu)}
For each $r\ge 1$, we have \[ \dim C_r(\frakO_2\cap\fraku)=r\dim\frakw\] and $\frakw^r$ is an irreducible of maximal dimension in $C_r(\frakO_2\cap\fraku)$.
\end{lemma}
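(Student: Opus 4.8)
The plan is to establish the two assertions — the dimension equality and the maximality of $\frakw^r$ — by exhibiting $\frakw^r$ as an irreducible component of $C_r(\frakO_2\cap\fraku)$ of the claimed dimension and then showing that no component can have larger dimension. First I would observe that since $\frakw$ is a commutative square-zero subalgebra contained in $\fraku$, and every element of $\frakw$ has square zero in $\mathfrak{gl}_n$, we have $\frakw\subset\frakO_2\cap\fraku$ and hence $\frakw^r\subset C_r(\frakO_2\cap\fraku)$; moreover $\frakw^r$ is an affine space of dimension $r\dim\frakw$, in particular irreducible. The containment $r\dim\frakw\le\dim C_r(\frakO_2\cap\fraku)$ is therefore immediate, and the content of the lemma is the reverse inequality.

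For the upper bound I would argue via the first projection $\rho\colon C_r(\frakO_2\cap\fraku)\to\frakO_2\cap\fraku$, $(x_1,\dots,x_r)\mapsto x_1$, and bound the dimension of a general fibre. The key local fact is that for $\g$ of type $A$ or $C$, the set $\frakO_2\cap\fraku$ is itself well understood: its top-dimensional piece consists of strictly-upper-triangular square-zero matrices, and $B$ (or $U$) acts on it with $\frakw$ meeting the dense orbit. Given $x_1$ in the top stratum, I would compute that the commutant of $x_1$ inside $\frakO_2\cap\fraku$ — i.e. the fibre $\rho^{-1}(x_1)$ cut out further by the square-zero and commuting conditions on the remaining $r-1$ slots — has dimension at most $(r-1)\dim\frakw-\dim(U\cdot x_1)+\dim\frakw$, so that $\dim C_r(\frakO_2\cap\fraku)\le\dim(U\cdot x_1)+[(r-1)\dim\frakw-(\dim(U\cdot x_1)-\dim\frakw)]=r\dim\frakw$. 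The cleanest way to make this precise is to use that $\frakO_2\cap\fraku=\overline{U\cdot\frakw}$ with $\dim(U\cdot x_1)=\dim\frakw$ for generic $x_1$ (this is essentially the computation behind Lemma above on $\dim\frakO_2$, restricted to $\fraku$, together with Remark \ref{P_w} identifying $\frakw$ with a Lie algebra of a unipotent radical), and then note that once $x_1$ is fixed in the open $U$-orbit, any $x_i$ commuting with it and lying in $\frakO_2\cap\fraku$ must lie in a translate of $\frakw$ of dimension $\dim\frakw$ by the transversality of $\frakw$ to the orbit; summing over the $r-1$ remaining coordinates gives $\dim\rho^{-1}(x_1)\le(r-1)\dim\frakw$, hence the total is $\le\dim\frakw+(r-1)\dim\frakw=r\dim\frakw$. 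Since every irreducible component meets the preimage of some orbit and the regular (top) stratum is dense in $\frakO_2\cap\fraku$, this bounds all components, and $\frakw^r$ — having exactly this dimension — is one of maximal dimension, whence both claims.

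The main obstacle I anticipate is the transversality/fibre-dimension estimate: justifying that a square-zero strictly-upper-triangular matrix commuting with a fixed generic $x_1\in\frakO_2\cap\fraku$ is constrained to a space of dimension at most $\dim\frakw$, uniformly, and handling the lower-dimensional strata of $\frakO_2\cap\fraku$ where the orbit of $x_1$ is smaller (there the fibre could a priori be larger, so one must check the trade-off $\dim(U\cdot x_1)+\dim\rho^{-1}(x_1)\le r\dim\frakw$ still holds). For type $A$ one can be quite explicit — square-zero upper-triangular matrices correspond to partial flags and the centralizer computation reduces to linear algebra of the underlying partition $[2^s,1^t]$ — and for type $C$ the same analysis goes through after imposing the anti-diagonal symmetry, using the formula for $\dim\frakO_2$ already recorded. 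I would therefore organize the proof so that the generic-fibre bound is the one technical lemma, with the stratification argument reduced to a short dimension count invoking Corollary \ref{irreducible or equidimensional}-style reasoning and the semicontinuity of fibre dimension.
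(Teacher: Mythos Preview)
Your proposal overshoots the mark considerably. The paper's proof is three lines and uses no fibre analysis, no orbit stratification, and no transversality: one simply observes the trivial containment $C_r(\frakO_2\cap\fraku)\subseteq(\frakO_2\cap\fraku)^r$, which gives $\dim C_r(\frakO_2\cap\fraku)\le r\dim(\frakO_2\cap\fraku)$, and then computes $\dim(\frakO_2\cap\fraku)=\tfrac{1}{2}\dim\calO_2=\dim\frakw$ by invoking Jantzen's result \cite[Theorem~10.11]{Jan:2004} that $\dim(\calO\cap\fraku)=\tfrac{1}{2}\dim\calO$ for each nilpotent orbit $\calO$. Since $\frakw^r$ sits inside $C_r(\frakO_2\cap\fraku)$ and has dimension $r\dim\frakw$, equality holds and $\frakw^r$ is a component of maximal dimension.

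The idea you are missing is that the commuting conditions can be \emph{discarded entirely} for the upper bound: the product $(\frakO_2\cap\fraku)^r$ already has the right dimension, so there is nothing to gain from studying centralizers of generic $x_1$ or worrying about how fibres behave over lower strata. Your projection argument, stripped of the commuting constraints, collapses to exactly this trivial bound --- the fibre over $x_1$ is contained in $\{x_1\}\times(\frakO_2\cap\fraku)^{r-1}$ regardless of any commutation --- so the elaborate machinery you propose (transversality, the trade-off $\dim(U\cdot x_1)+\dim\rho^{-1}(x_1)$, semicontinuity over strata) is doing no work. Worse, the version you sketch has a real gap: you correctly flag that components mapping into lower strata could have larger fibres, but then offer no mechanism to control that trade-off beyond a hope that it ``still holds''; the paper's argument sidesteps this entirely because it never uses the projection. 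The one nontrivial input, which you gesture at but do not isolate, is the equality $\dim(\frakO_2\cap\fraku)=\dim\frakw$, and that is handled by a citation rather than by orbit analysis.
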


\begin{proof}
By \cite[Theorem 10.11]{Jan:2004}, one has
\[ \dim(\frakO_2\cap\fraku)=\max_{\calO\subset\frakO_2}\{ \dim(\calO\cap\fraku)\}=\max_{\calO\subset\frakO_2}\{\frac{1}{2}\dim\calO \}=\frac{1}{2}\dim\calO_2. \]
It follows that $\dim C_r(\frakO_2\cap\fraku)\le\frac{r}{2}\dim\calO_2$. On the other hand, note that $\dim\calO_2=2\dim\frakw$ (from \eqref{dimw}). Hence the fact that $\frakw^r$ is a subset of $C_r(\frakO_2\cap\fraku)$ implies the lemma. 
\end{proof}

Now we prove the main result of this subsection.

\begin{theorem}\label{dimC_r(frakO)}
For each $r\ge 1$, one has
\[ \dim C_r(\frakO_2)=(r+1)\dim\frakw=
\begin{cases}
(r+1)\lfloor\frac{n^2}{4}\rfloor~~~&\text{if}~~\g=\fraksl_n,\\
(r+1)\frac{\ell^2+\ell}{2}~~~&\text{if}~~\g=\mathfrak{sp}_{2\ell}.
\end{cases}
\]
Consequently, $\frakV_r$ is an irreducible component of maximal dimension in $C_r(\frakO_2)$.
\end{theorem}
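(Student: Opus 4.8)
The plan is to bound $\dim C_r(\frakO_2)$ from above by $(r+1)\dim\frakw$ and then observe that the reverse inequality is already in hand. For the lower bound, recall from Corollary~\ref{lower bound for C_r(N_1)} (or directly from Proposition~\ref{dimV_reg}) that $\frakV_r\subset C_r(\N_1)$; but since $\frakO_2$ consists of square-zero elements when $\g$ is of type $A$ or $C$, and $\frakw\subset\frakO_2$, we have $\frakw^r\subset C_r(\frakO_2)$ and hence $\frakV_r=G\cdot\frakw^r\subset C_r(\frakO_2)$. By Proposition~\ref{dimV_reg}, $\dim\frakV_r=(r+1)\dim\frakw$, so $\dim C_r(\frakO_2)\ge (r+1)\dim\frakw$. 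Once the matching upper bound is established, equality holds, $\frakV_r$ is an irreducible closed subvariety of $C_r(\frakO_2)$ of top dimension, and since $C_r(\frakO_2)$ has all components of dimension at most $\dim\frakV_r$, $\frakV_r$ is indeed an irreducible component of maximal dimension. The explicit numerical formula then follows by plugging in \eqref{dimw}, noting that $\lfloor n^2/4\rfloor=\ell^2$ when $n=2\ell$ and $\lfloor n^2/4\rfloor=\ell(\ell+1)$ when $n=2\ell+1$.

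For the upper bound I would argue via the projection $\rho\colon C_r(\frakO_2)\to\frakO_2$ onto the first coordinate, combined with the theorem on dimension of fibers. Since $\frakO_2=\overline{\calO_2}$ and $\calO_2$ is a single orbit (in types $A$ and $C$), the open dense subset $\rho^{-1}(\calO_2)$ is $G$-stable and it suffices to bound $\dim\rho^{-1}(x)$ for $x\in\calO_2$; by $G$-equivariance we may take $x=x_{\frakw}$. The fiber $\rho^{-1}(x_{\frakw})$ is $\{x_{\frakw}\}\times C_{r-1}(\frakO_2\cap z(x_{\frakw}))$, where $z(x_{\frakw})$ is the centralizer. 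Now the key point: every element of $\frakO_2$ commuting with $x_{\frakw}$ lies in $\frakO_2\cap z(x_{\frakw})$, and one wants to show this intersection (or rather the tuples in it) contributes at most $r\dim\frakw$ to the fiber, while $\calO_2$ itself contributes $\dim\calO_2=2\dim\frakw$; adding gives $2\dim\frakw+(r-1)\dim\frakw=(r+1)\dim\frakw$. Concretely I expect one shows $\dim C_{r-1}(\frakO_2\cap z(x_{\frakw}))\le (r-1)\dim\frakw$ by an explicit matrix analysis: writing $x_{\frakw}$ in Jordan form $[2^s,1^t]$, the square-zero elements in its centralizer can be put in an upper-triangular-like normal form, reducing the count to a calculation formally parallel to Lemma~\ref{C_r(O_2capu)}.

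The main obstacle will be controlling $\frakO_2\cap z(x_{\frakw})$ and proving the fiber bound $\dim C_{r-1}(\frakO_2\cap z(x_{\frakw}))\le(r-1)\dim\frakw$. Unlike the nilradical $\fraku$, the centralizer $z(x_{\frakw})$ is not a nilpotent subalgebra and the square-zero condition interacts with it in a way that needs a careful coordinate computation on the block structure determined by the partition $[2^s,1^t]$; one must also keep the symplectic constraint in type $C$. I would handle this by reducing to the linear-algebra problem over $\mathfrak{gl}_n$ of counting commuting $r$-tuples of square-zero matrices all commuting with a fixed $[2^s,1^t]$ nilpotent, invoking the dimension estimates of Basili–Iarrobino type (or a direct argument using that such a centralizer is a sum of matrix blocks), and then cutting down by the defining conditions of $\g$. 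The remaining steps — the fiber-dimension theorem, irreducibility of $\frakV_r$, and the final substitution — are routine.
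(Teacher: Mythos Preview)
Your lower bound is fine, but the upper-bound argument has a real gap. You assert that because $\calO_2$ is open dense in $\frakO_2$, it suffices to bound the fiber $\rho^{-1}(x)$ for $x\in\calO_2$. That only controls $\dim\overline{\rho^{-1}(\calO_2)}$, not $\dim C_r(\frakO_2)$ itself: $C_r(\frakO_2)$ is in general reducible, and an irreducible component may lie entirely over the boundary $\frakO_2\setminus\calO_2$ (so that $\rho^{-1}(\calO_2)$ is open but not dense in $C_r(\frakO_2)$). To salvage your approach you would need an induction over the orbit stratification---using the $GL_r$-symmetry to force such a component into some $C_r(\overline{\calO'})$ with $\calO'\subsetneq\calO_2$ and then bounding that---which you have not set up. Even granting this, the fiber estimate $\dim C_{r-1}(\frakO_2\cap z(x_{\frakw}))\le(r-1)\dim\frakw$ that you flag as the ``main obstacle'' is itself a nontrivial computation: the centralizer $z(x_{\frakw})$ is not contained in $\fraku$, so Lemma~\ref{C_r(O_2capu)} does not apply directly, and the Basili--Iarrobino type bound you gesture at is a separate argument you do not actually supply.

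The paper takes a shorter and different route that avoids both difficulties. One first observes that any commuting tuple of nilpotent elements is simultaneously $G$-conjugate into $\fraku^r$, so $C_r(\frakO_2)=G\cdot C_r(\frakO_2\cap\fraku)$. Lemma~\ref{C_r(O_2capu)} already gives $\dim C_r(\frakO_2\cap\fraku)=r\dim\frakw$, and this was proved trivially from $\dim(\frakO_2\cap\fraku)=\tfrac12\dim\calO_2=\dim\frakw$ together with the crude inclusion $C_r(\frakO_2\cap\fraku)\subset(\frakO_2\cap\fraku)^r$. A normalizer computation for the $G$-saturation then yields
\[
\dim C_r(\frakO_2)=\dim\bigl[G\cdot(\frakO_2\cap\fraku)\bigr]-\dim(\frakO_2\cap\fraku)+r\dim\frakw=\dim\frakO_2-\dim\frakw+r\dim\frakw=(r+1)\dim\frakw,
\]
with no fiber analysis over individual orbits and no centralizer calculation. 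The point is that $\frakO_2\cap\fraku$ is far more tractable than $\frakO_2\cap z(x_{\frakw})$: its dimension is known by the standard orbit-intersection formula, and $\frakw^r$ visibly realizes the maximum inside $C_r(\frakO_2\cap\fraku)$.
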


\begin{proof}
It is noted that $C_r(\frakO_2)=G\cdot C_r(\frakO_2\cap\fraku)$ for each $r\ge 1$. It follows that
\begin{align*}
 \dim C_r(\frakO_2) &=\dim G-\dim N_G(C_r(\frakO_2\cap\fraku))+\dim C_r(\frakO_2\cap\fraku)\\
&=\dim G-\dim N_G\left((\frakO_2\cap\fraku)^r\right)+r\dim\frakw\\
&=\dim G-\dim N_G\left(\frakO_2\cap\fraku\right)+r\dim\frakw\\
&=\dim[G\cdot(\frakO_2\cap\fraku)]-\dim(\frakO_2\cap\fraku)+r\dim\frakw\\
&=\dim\frakO_2-\dim\frakw+r\dim\frakw\\
&=(r+1)\dim\frakw
\end{align*}
where $N_G(S)=\{g\in G: g\cdot S\subseteq S\}$, the normalizer of $S\subseteq\g$. Thus, we have proved that $\dim C_r(\frakO_2)=\dim\frakV_r$ for each $r\ge 1$, hence the theorem follows from Proposition \ref{dimV_reg}.
\end{proof}

As an application, we are now able to compute the dimension of $C_r(\N_1)$ when $p=2$ and $\g$ is of type $A$. Indeed, since $\N_1=\frakO_2$ when $p=2$, we immediately have

\begin{proposition}\label{p=2, type A}
Suppose $\g=\fraksl_n$ and $p=2$. Then for each $r\ge 1$, 
\[ \dim C_r(\N_1)=\dim C_r(\frakO_2)=(r+1)\lfloor\frac{n^2}{4}\rfloor. \] 
\end{proposition}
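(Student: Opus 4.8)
The plan is essentially to observe that the statement is an immediate consequence of results already in place, once we pin down the key identification $\N_1 = \frakO_2$ in characteristic $2$ for $\g = \fraksl_n$. First I would recall the structure theory of the restricted nullcone: for $x \in \fraksl_n$ we have $x^{[p]} = i(x)^p$ under the natural embedding $i:\g \hookrightarrow \mathfrak{gl}_n$, as noted in Section \ref{classical Lie algebras}. Therefore, specializing to $p = 2$, the condition $x^{[2]} = 0$ is literally the condition $i(x)^2 = 0$, which is exactly the defining equation of $\frakO_2$. Hence $\N_1 = \frakO_2$ as subvarieties of $\g$ (both are closed and cut out by the same polynomial equations on matrix entries). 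This is the only place where the hypothesis $p = 2$ and type $A$ enters, and it is where a little care is warranted: one should make sure that for $\fraksl_n$ in characteristic $2$ there are no subtleties in the scheme structure, but since we only care about the underlying reduced variety (dimension is a topological invariant) the set-theoretic equality suffices.

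Once $\N_1 = \frakO_2$ is established, the proof concludes in one line: apply the definition of $C_r(-)$ from Section \ref{commuting vars}, which depends only on the underlying variety $V$, to get $C_r(\N_1) = C_r(\frakO_2)$; then invoke Theorem \ref{dimC_r(frakO)}, which gives $\dim C_r(\frakO_2) = (r+1)\dim\frakw = (r+1)\lfloor n^2/4 \rfloor$ for $\g = \fraksl_n$. The floor expression matches the case division in \eqref{dimw}: when $n = 2\ell$ we get $(r+1)\ell^2 = (r+1)\lfloor n^2/4\rfloor$, and when $n = 2\ell+1$ we get $(r+1)\ell(\ell+1) = (r+1)\lfloor n^2/4 \rfloor$, so the unified formula is correct in both parities.

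I do not anticipate any serious obstacle; the content of this proposition is genuinely a corollary, and the substantive work was already done in Theorem \ref{dimC_r(frakO)} (which in turn rests on Proposition \ref{dimV_reg} and Lemma \ref{C_r(O_2capu)}). The only point deserving a sentence of justification is the equality $\N_1 = \frakO_2$ when $p = 2$, which I would state explicitly by reducing to the identity $x^{[p]} = i(x)^p$ and observing that $p = 2$ makes the nullcone equation quadratic. If one wanted to be thorough, one could also remark that this forces, via Corollary \ref{lower bound for C_r(N_1)}, that the lower bound $\dim C_r(\N_1) \ge \dim \frakV_r$ is in fact an equality in this case, so $\frakV_r$ is an irreducible component of maximal dimension in $C_r(\N_1)$ as well — a small bonus worth recording.
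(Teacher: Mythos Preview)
Your proposal is correct and matches the paper's own argument exactly: the paper simply notes that $\N_1=\frakO_2$ when $p=2$ and then invokes Theorem~\ref{dimC_r(frakO)}. Your additional remarks (verifying the floor formula against \eqref{dimw} and noting that the bound in Corollary~\ref{lower bound for C_r(N_1)} is attained) are correct elaborations but not required.
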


\begin{remark}
The last result not only points out the case when the equality in Corollary \ref{lower bound for C_r(N_1)} occurs but also generalize a result in \cite{L:2007} on the dimension of $C_2(\N_1)$.
\end{remark}

\subsection{Rank 2 cases}\label{rank 2 cases}
Apply Theorem \ref{dimC_r(frakO)}, we explicitly calculate the  dimension of $C_r(\N_1)$ for $\g$ of rank 2, i.e. $\g$ is of type $A_2$ or $C_2$. We keep assuming that the characteristic of $k$ is a good prime. We first present the result for type $A_2$.

\begin{corollary}
Suppose $\g$ is of type $A_2$ and $p$ is any prime. For each $r\ge 1$, one has
\[ \dim C(\fraku_1)=
\begin{cases}
2r+1~&\text{if}~p\ne 2,\\
2r~&\text{if}~p=2,
\end{cases}
\] and 
\[ \dim C_r(\N_1)=
\begin{cases}
2r+4~&\text{if}~p\ne 2,\\
2r+2~&\text{if}~p=2.
\end{cases}
\]
\end{corollary}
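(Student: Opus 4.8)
The plan is to treat the two cases, $\fraksl_3$ (type $A_2$) and $\mathfrak{sp}_4$ (type $C_2$), by first handling the nilpotent commuting variety $C_r(\fraku_1)$ inside the Borel and then deducing the dimension of $C_r(\N_1)$ by a standard $G$-saturation argument. For type $A_2$ I would start from the observation that the restricted nullcone behaves differently in the two characteristic ranges: when $p\ge h=3$ we have $\N_1=\N$, $\fraku_1=\fraku$, and $\ell=2$, so Corollary \ref{lower bound for C_r(N_1)} (with $\g=\fraksl_{2\ell+1}$, $\ell=1$, $n=3$) together with the irreducible-component computation of Proposition \ref{V_reg} and \ref{U_reg} gives both an upper and a lower bound for $C_r(\fraku)$ and $C_r(\N)$. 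Concretely, $\frakV_r$ has dimension $(r+1)\lfloor 9/4\rfloor=2(r+1)=2r+2$, while $V_{\reg}$ has dimension $\dim\N+(r-1)\ell=6+2(r-1)=2r+4$; one must check that for $\fraksl_3$ these exhaust the components, which is exactly the known irreducibility of $C_r(\N)$ for $\fraksl_3$ from \cite{Ngo2:2013} (or \cite{NS:2014} for small rank). So for $p\ne 2$ irreducibility forces $\dim C_r(\N_1)=\dim C_r(\N)=2r+4$ and likewise $\dim C_r(\fraku_1)=\dim C_r(\fraku)=\dim\fraku+(r-1)\ell=3+2(r-1)=2r+1$.

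For $p=2$ in type $A_2$, the point is that $\N_1=\frakO_2$ and $\fraku_1=\frakO_2\cap\fraku$, so the answer is read off directly from the results just proved: Lemma \ref{C_r(O_2capu)} gives $\dim C_r(\fraku_1)=\dim C_r(\frakO_2\cap\fraku)=r\dim\frakw=2r$ (here $\dim\frakw=\ell(\ell+1)=1\cdot 2=2$ with $n=2\ell+1=3$, so $\lfloor n^2/4\rfloor=2$), and Theorem \ref{dimC_r(frakO)} (equivalently Proposition \ref{p=2, type A}) gives $\dim C_r(\N_1)=(r+1)\lfloor 9/4\rfloor=2(r+1)=2r+2$. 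This is the cleanest half of the argument — it is essentially just a citation of the theorem proved in the previous subsection.

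For type $C_2=\mathfrak{sp}_4$ one again has $h=4>2$ so $p\ne 2$ forces $\N_1=\N$. I would invoke the known description of $C_r(\N)$ for rank $\le 2$: for $\mathfrak{sp}_4$ one checks (via Premet for $r=2$ and the rank-$2$ analysis, or via the saturation/ component bookkeeping of Section \ref{w}) which of $V_{\reg}$ and $\frakV_r$ dominates. Here $\ell=2$, $\dim\N=\dim\g-\ell=10-2=8$, so $\dim V_{\reg}=8+2(r-1)=2r+6$, while $\dim\frakV_r=(r+1)\frac{\ell^2+\ell}{2}=3(r+1)=3r+3$; these agree at $r=3$ and $\frakV_r$ overtakes for $r\ge 4$, matching Corollary \ref{C_r(N)_not_equidimensional}. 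The subtlety is that the corollary will not be stated for $C_2$ with a single clean formula unless $C_r(\N)$ for $\mathfrak{sp}_4$ is actually known to have exactly these components; I would either cite the rank-$2$ case of \cite{NS:2014}/\cite{Ngo2:2013} or argue directly that no component of $C_r(\fraku)$ beats $\max\{\dim\fraku+(r-1)\ell,\ r\dim\frakw\}$ using that $\fraku$ for $\mathfrak{sp}_4$ is small enough to enumerate commuting tuples by hand. Then $C_r(\N_1)=C_r(\N)$ has dimension $\max\{2r+6,3r+3\}$, and similarly $C_r(\fraku_1)=C_r(\fraku)$ has dimension $\max\{\dim\fraku+(r-1)\ell,\ r\dim\frakw\}=\max\{4+2(r-1),\ 3r\}=\max\{2r+2,3r\}$.

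The main obstacle I anticipate is the completeness claim in each characteristic-$0$-prime case, i.e. verifying that the two explicit subvarieties $V_{\reg}$ and $\frakV_r$ (and their Borel analogues) account for \emph{all} irreducible components of maximal dimension, rather than merely providing a lower bound. For $\fraksl_3$ this is genuinely known (irreducibility, \cite{Ngo2:2013}), so there is nothing to prove beyond assembling the dimension formulas; for $\mathfrak{sp}_4$ it requires either an appeal to the rank-$2$ results in the literature or a short direct argument exploiting the very low dimension of $\fraku$. Once that is granted, the rest is pure bookkeeping: plug $\ell=2$, $n=3$ (type $A_2$) or $\ell=2$ (type $C_2$) into \eqref{dimw}, Proposition \ref{dimV_reg}, Proposition \ref{V_reg}/\ref{U_reg}, and Theorem \ref{dimC_r(frakO)}, and separate the cases $p=2$ versus $p\ne 2$ according to whether $\N_1=\N$ or $\N_1=\frakO_2$.
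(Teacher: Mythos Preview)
Your argument for the $A_2$ statement is correct and essentially the same as the paper's: for $p\ne 2$ you invoke the irreducibility of $C_r(\N)$ and $C_r(\fraku)$ for $\fraksl_3$ from \cite{Ngo2:2013} and then read off the dimensions via Proposition~\ref{V_reg}/\ref{U_reg} (the paper cites the dimension results from \cite{Ngo2:2013} directly, which amounts to the same thing), and for $p=2$ you appeal to Lemma~\ref{C_r(O_2capu)} and Proposition~\ref{p=2, type A}, exactly as the paper does.

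Note, however, that the stated corollary is about type $A_2$ only; your entire discussion of $\mathfrak{sp}_4$ is extraneous here and in fact pertains to the \emph{next} corollary in the paper. For that separate $C_2$ result the paper does not rely on a citation to rank-$2$ literature but instead gives a direct argument: $\dim C_r(\fraku)$ is computed by embedding $C_r(\fraku)$ in a product of an affine space with a determinantal variety to get the upper bound $3r$, and $\dim C_r(\N)$ is obtained by decomposing $C_r(\N)$ along nilpotent orbits together with a $GL_r(k)$-symmetry argument that forces every non-regular component into $C_r(\frakO_2)$. Your sketch for $C_2$ identifies the correct candidate components and numbers but leaves the ``completeness'' step vague; the paper's determinantal and $GL_r$-action arguments are precisely what fill that gap.
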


\begin{proof}
Suppose $p> 2$. Then from Proposition 7.1.1 and Theorem 7.1.2 in \cite{Ngo2:2013}, we have for each $r\ge 1$ \[\dim C_r(\fraku_1)=\dim C_r(\fraku)=2r+1\quad,\quad \dim C_r(\N_1)=\dim C_r(\N)=2r+4.\]
Now if $p=2$, then Lemma \ref{C_r(O_2capu)} and Proposition \ref{p=2, type A} give the desired results.
\end{proof}

\begin{corollary}
Suppose $\g$ is of type $C_2$ and $p\ge 3$. For $r\ge 1$, one has 
\[ \dim C_r(\fraku_1)=
\begin{cases}
2r+2~&\text{if}~r=1, p\ne 3,\\
3r~&\text{else},
\end{cases}
\] and
\[ \dim C_r(\N_1)=
\begin{cases}
2r+6~&\text{if}~r\le 2, p\ne 3,\\
3r+3~&\text{else}.
\end{cases}
\]
\end{corollary}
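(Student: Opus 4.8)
The plan is to split the argument according to whether $p=3$ or $p\ge 5$. The Coxeter number of type $C_2$ is $h=4$, so $p\ne 3$ together with $p\ge 3$ forces $p\ge h$, hence $\N_1=\N$ and $\fraku_1=\fraku$; for $p=3$ one has the proper inclusion $\N_1\subsetneq\N$. The key observation in the case $p=3$ is that $\N_1=\frakO_2$ (and hence $\fraku_1=\frakO_2\cap\fraku$) when $\g=\mathfrak{sp}_4$: an element $x\in\g$ lies in $\N_1$ iff $i(x)^3=0$ in $\mathfrak{gl}_4$, i.e.\ iff $x$ has Jordan type with all parts $\le 3$, and the only symplectic partitions of $4$ with all parts $\le 3$ are $[2,2]$, $[2,1,1]$, $[1^4]$ --- the partition $[3,1]$ is not symplectic --- each of which already satisfies $i(x)^2=0$. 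Thus $\N_1=\{x\in\g:i(x)^2=0\}=\frakO_2=\overline{\calO_{[2,2]}}$. Now Theorem \ref{dimC_r(frakO)} yields $\dim C_r(\N_1)=\dim C_r(\frakO_2)=(r+1)\dim\frakw=3r+3$, while Lemma \ref{C_r(O_2capu)} yields $\dim C_r(\fraku_1)=\dim C_r(\frakO_2\cap\fraku)=r\dim\frakw=3r$, for every $r\ge 1$; this is the ``else'' branch of both formulas.

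For $p\ge 5$ the point is that $\frakO_2=\overline{\calO_{[2,2]}}$ is the union of all nilpotent orbits of $\mathfrak{sp}_4$ except the regular one, so $\N\setminus\frakO_2=\calO_{\reg}$, and likewise $\fraku\setminus\frakO_2$ consists precisely of the regular elements of $\fraku$ (both statements follow at once from the orbit list and closure order of $\mathfrak{sp}_4$, cf.\ \cite{CM:1992}). I then use the tautological decomposition
\[ C_r(\N)=C_r(\frakO_2)\ \cup\ \bigcup_{i=1}^{r}\bigl\{(x_1,\dots,x_r)\in C_r(\N):x_i\in\calO_{\reg}\bigr\}. \]
By Theorem \ref{dimC_r(frakO)}, $\dim C_r(\frakO_2)=3r+3$. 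If $x_i\in\calO_{\reg}$, the remaining coordinates lie in $z_\g(x_i)$, a conjugate of $z_{\reg}$, which is commutative of dimension $\ell=2$ and contained in $\N$; exactly as in Proposition \ref{V_reg}, the $i$-th set in the union is irreducible and its closure is a coordinate-permuted copy of $V_{\reg}$, of dimension $\dim\calO_{\reg}+(r-1)\ell=2r+6$. Hence $\dim C_r(\N)=\max(3r+3,\,2r+6)$, which equals $2r+6$ for $r\le 2$ and $3r+3$ for $r\ge 3$ (both giving $12$ at $r=3$), as claimed. Running the same decomposition inside $\fraku$, with Lemma \ref{C_r(O_2capu)} and Proposition \ref{U_reg} in place of Theorem \ref{dimC_r(frakO)} and Proposition \ref{V_reg}, gives $\dim C_r(\fraku)=\max(3r,\,2r+2)$, i.e.\ $2r+2$ for $r=1$ and $3r$ for $r\ge 2$. (For $r=1$ this merely records $C_1(\fraku)=\fraku$ and $C_1(\N)=\N$, and for $r=2$ it recovers Premet's equality $\dim C_2(\N)=\dim\g=10$ \cite{Pr:2003}.)

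The only real point to verify is the pair of structural facts above that single out rank $2$: for $p=3$, that $\N_1$ contains no orbit outside $\frakO_2$; for $p\ge 5$, that $\N$ contains exactly one such orbit, namely $\calO_{\reg}$. Both are immediate from the (very short) list of nilpotent orbits of $\mathfrak{sp}_4$ and its closure order, but they are exactly what fails in higher classical rank, where $\frakO_2$ omits several orbits and Theorem \ref{dimC_r(frakO)} no longer pins down $\dim C_r(\N_1)$. Granting them, the remaining work --- computing the dimension of the regular stratum and checking that the decomposition is exhaustive --- is a routine application of Theorem \ref{dimC_r(frakO)}, Lemma \ref{C_r(O_2capu)}, and Proposition \ref{V_reg}.
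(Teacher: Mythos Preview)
Your proof is correct and, for the $p=3$ case and for $C_r(\N)$ when $p\ge 5$, follows essentially the same strategy as the paper: identify $\N_1=\frakO_2$ for $p=3$ by inspecting the symplectic partitions of $4$, then invoke Lemma \ref{C_r(O_2capu)} and Theorem \ref{dimC_r(frakO)}; for $p\ge 5$, stratify $C_r(\N)$ according to whether or not some coordinate is regular and compare $\dim C_r(\frakO_2)=3r+3$ with $\dim V_{\reg}=2r+6$. The paper reaches the same conclusion for $C_r(\N)$ via a slightly more circuitous route, first decomposing along the projection to the first coordinate and then invoking the $GL_r(k)$-action to force non-regular components into $C_r(\overline{\calO_{[2,2]}})$; your direct ``either all coordinates lie in $\frakO_2$ or some coordinate is regular'' is cleaner and bypasses that extra step.

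Where your argument genuinely diverges is the computation of $\dim C_r(\fraku)$ for $p\ge 5$. The paper writes out $\fraku$ in coordinates, computes the commutator ideal explicitly (generated by $x_it_j-x_jt_i$ and $x_iy_j-x_jy_i$), embeds $C_r(\fraku)$ as a proper closed subvariety of an irreducible determinantal variety $\mathcal{P}$ of dimension $3r+1$, and squeezes $3r\le\dim C_r(\fraku)<3r+1$. You instead run the same regular/non-regular stratification inside $\fraku$, using that $\fraku\setminus(\frakO_2\cap\fraku)$ is exactly the $B$-orbit of $x_{\reg}$ and that $z_\g(x_{\reg})\subset\fraku$, to obtain $\dim C_r(\fraku)=\max(3r,\,2r+2)$ directly from Lemma \ref{C_r(O_2capu)} and Proposition \ref{U_reg}. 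Your approach is more conceptual and uniform with the $C_r(\N)$ case, and avoids any coordinate calculation; the paper's approach has the virtue of being entirely self-contained and not relying on the structure of the regular centralizer, but is specific to $C_2$ in a way yours is not.
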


\begin{proof}
First we consider the case when $p=3$. By \cite[Theorem 5.1]{UGA}, $\N_1=\frakO_2$ so that 
\[\dim C_r(\fraku_1)=3r\quad,\quad\dim C_r(\N_1)=3(r+1)\] by Lemma \ref{C_r(O_2capu)} and Theorem \ref{dimC_r(frakO)}.

Now we assume $p>3$. Then $\N_1=\N$ and $\fraku_1=\fraku$. Recall that $\fraku$ is the space of all matrices of the form
\[\left(
\begin{array}{cccc}
0 & x & y & z \\
0 & 0 & t & y \\
0 & 0 & 0 & -x \\
0 & 0 & 0 & 0
\end{array}\right)
\]
For $r\ge 2$, the commutator on these matrices implies that the variety $C_r(\fraku)$ is defined by polynomials \[ x_it_j-x_jt_i~,~ x_iy_j-x_jy_i\] for $1\le i<j\le r$. It is then easy to see that $C_r(\fraku)$ is a closed subvariety of the one defined by polynomials $\{x_it_j-x_jt_i\}_{1\le i,j\le r}$ in the affine space $\fraku^r$, which is the product of an affine space of dimension $2r$ (corresponding to free parameters $y_i, z_j$) and the determinantal variety of all $2\times 2$ minors over the matrix
\[ 
\left(
\begin{array}{cccc}
x_1 & x_2 & \cdots & x_r\\
t_1 & t_2 & \cdots & t_r
\end{array}
\right).
\]
Denote this product by $\mathcal{P}$, we then have by \cite[Proposition 3.2.2]{Ngo2:2013}, $\dim(\mathcal{P})=3r+1$. Now as $\mathcal{P}$ is irreducible and $C_r(\fraku)$ is a proper subvariety of $\mathcal{P}$, we obtain $\dim C_r(\fraku)<3r+1$. On the other hand, $\dim C_r(\fraku)\ge \dim C_r(\frakO_2\cap\fraku)=3r$. Thus, we have shown that $\dim C_r(\fraku)=3r$. 

Next, as $\N$ contains finitely many orbits we decompose
\[ C_r(\N)=V_{\reg}\cup \overline{G\cdot\left(x_{[2,2]},C_{r-1}(z(x_{[2,2]})\cap\N)\right)}\cup \overline{G\cdot\left(x_{[2,1,1]},C_{r-1}(z(x_{[2,1,1]})\cap\N)\right)}\]
where $x_{[2,2]}$ (or $x_{[2,1,1]}$) is a representative in the orbit of partition $[2,2]$ (or $[2,1,1]$). Hence, any irreducible component of $C_r(\N)$, other than $V_{\reg}$, lies in one of the last two subvarieties. On the other hand, by 
similar argument as in \cite[Proposition 2.1]{Pr:2003}, we have $GL_r(k)$ acting on $C_r(\N)$ as follows
\[ \left(
\begin{array}{cccc}
a_{11} & a_{12} & \cdots & a_{1r} \\
a_{21} & a_{22} & \cdots & a_{2r} \\
: & : & : & : \\
a_{r1} & a_{r2} & \cdots & a_{rr}
\end{array}\right)\bullet(x_1,\ldots,x_r)=\left( \sum_{i=1}^ra_{1i}x_i, \ldots, \sum_{i=1}^ra_{ri}x_i\right) \]
for all $(x_1,\ldots,x_r)\in C_r(\N)$. In particular, suppose $V$ is an irreducible component of $C_r(\N)$. Then any permutation of $(x_1,\ldots,x_r)\in V$ is also in $V$. This indicates that if $V\subseteq\overline{G\cdot(x,C_{r-1}(z(x)\cap\N))}$ then $V$ must be in $C_r(\overline{\calO_x})$. Hence, the dimension of $C_r(\N)$ is in fact the maximum of the set $\{\dim V_{\reg}, \dim C_r(\overline{\calO_{[2,2]}})$, $\dim C_r(\overline{\calO_{[2,1,1]}})\}$. We already know that the first amount is $2r+6$ (by Proposition \ref{V_reg}). Since $C_r(\overline{\calO_{[2,1,1]}})\subset C_r(\overline{\calO_{[2,2]}})$, and $\dim C_r(\overline{\calO_{[2,2]}})=\dim C_r(\frakO_2)=3(r+1)$, we finally obtain
\[ \dim C_r(\N)=\max\{2r+6, 3r+3\}. \] 
This completes our proof.
\end{proof}

\section{Structure of the cohomology ring of Frobenius kernels}\label{cohen-macaulay}
We keep assuming that $p$ is a good prime for $G$. For each $r\ge 1$, let
\[ \opH^\bullet(G_r,k)=\bigoplus_{i\ge 0}\opH^i(G_r,k)\quad,\quad \opH^{2\bullet}(G_r,k)=\bigoplus_{i\ge 0}\opH^{2i}(G_r,k)
\]
where the latter is usually called the even cohomology ring of $G_r$. It is well known that $\opH^\bullet(G_r,k)$ is a graded commutative $k$-algebra. This section is aimed to answer the question on whether or not this $G_r$-cohomology ring is Cohen-Macaulay. This question is motivated from the conjectures in \cite[\S 7.2]{Ngo:2013}. Explicitly, we show that $\opH^\bullet((SL_2)_r,k)$ is Cohen-Macaulay for all $r\ge 1$. Some relation about Cohen-Macaulayness of $U_r$- and $B_r$-cohomology rings is also obtained. Finally, we apply some properties of commuting varieties in the previous section to point out the values of $r$ for which the ring $\Hbul(G_r,k)$ is not Cohen-Macaulay. 

We begin by recalling some distinguished features of a Cohen-Macaulay graded commutative ring. 

\begin{proposition}\cite[Proposition 2.5.1]{Ben:2004}\label{definition of CM}
Let $R=\bigoplus_{i\ge 0} R_i$ be a finitely generated graded commutative $k$-algebra. Then the following are equivalent.
\begin{enumalph}
\item $R$ is Cohen-Macaulay
\item There exists a homogeneous polynomial subring $k[x_1,\ldots, x_r]$ such
that $R$ is finitely generated free module over $k[x_1,\ldots, x_r]$.
\item If $k[x_1,\ldots, x_r]$ is a homogeneous polynomial subring of $R$ over which $R$ is
a finitely generated module then $R$ a free module over it.
\end{enumalph}
\end{proposition}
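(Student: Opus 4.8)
The plan is to establish the cycle of implications (a) $\Rightarrow$ (b) $\Rightarrow$ (c) $\Rightarrow$ (a). The two workhorses will be graded Noether normalization — every finitely generated graded commutative $k$-algebra $R$ contains homogeneous, algebraically independent elements $x_1,\dots,x_d$ (with $d=\dim R$) over which $R$ is a finitely generated module — and the structural fact that a maximal Cohen--Macaulay finitely generated graded module over a graded polynomial ring is free. Throughout, I would compute depth, dimension, and regular sequences relative to the irrelevant ideal $R_+=\bigoplus_{i>0}R_i$; in the genuinely graded-commutative case (odd characteristic) I would take the polynomial generators in even degrees, so that $k[x_1,\dots,x_r]$ is an honest polynomial ring and the nilpotent odd part plays no role in these invariants.

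For (a) $\Rightarrow$ (b): pick $A=k[x_1,\dots,x_d]$ as above with $R$ module-finite over it. Then $x_1,\dots,x_d$ is a homogeneous system of parameters for $R$, hence — because $R$ is Cohen--Macaulay — a regular sequence on $R$, so $R$ is a maximal Cohen--Macaulay $A$-module. The graded Auslander--Buchsbaum formula $\operatorname{pd}_A R+\depth_A R=\depth A=d$ forces $\operatorname{pd}_A R=0$, and a finitely generated graded projective module over $A$ is free. (If one wishes to avoid Auslander--Buchsbaum: lift a homogeneous $k$-basis of $R/(x_1,\dots,x_d)R$ to homogeneous elements of $R$, observe via graded Nakayama that they generate $R$ over $A$, and prove by induction on $d$ — using that $x_1,\dots,x_d$ is a regular sequence on $R$ — that the resulting graded surjection $A^n\to R$ is injective.)

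For (b) $\Rightarrow$ (c): first note that (b) already forces $R$ to be Cohen--Macaulay, since module-finiteness of $A\hookrightarrow R$ gives $\dim R=r$ and freeness makes $x_1,\dots,x_r$ a regular sequence on $R$, whence $\depth R\ge r=\dim R$. Now take \emph{any} homogeneous polynomial subring $B=k[y_1,\dots,y_s]$ over which $R$ is module-finite. Then $s=\dim R$, and since $R/(y_1,\dots,y_s)R$ is a finite module over $B/B_+\cong k$ it is finite-dimensional, so $y_1,\dots,y_s$ is a homogeneous system of parameters; Cohen--Macaulayness promotes it to a regular sequence on $R$, and the Nakayama-plus-induction argument of the previous step yields that $R$ is a finitely generated free $B$-module. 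This is exactly (c).

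For (c) $\Rightarrow$ (a): graded Noether normalization guarantees the existence of at least one homogeneous polynomial subring over which $R$ is module-finite, and (c) then makes $R$ free over it; the depth/dimension computation from the preceding paragraph gives $\depth R=\dim R$, i.e.\ $R$ is Cohen--Macaulay (and this simultaneously recovers (b)). I expect the one genuinely substantive point to be the freeness assertion — that a maximal Cohen--Macaulay graded module over a graded polynomial ring is free — which rests either on the graded Auslander--Buchsbaum theorem or on a direct argument (graded Nakayama to produce generators, then an induction on the number of variables exploiting the regular-sequence hypothesis to see that the presentation has no relations). The remaining work is routine bookkeeping about gradings and systems of parameters.
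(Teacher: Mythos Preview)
The paper does not supply its own proof of this proposition: it is quoted verbatim from Benson \cite[Proposition~2.5.1]{Ben:2004} and used as a black box. Your argument is the standard one and is correct; the cycle (a) $\Rightarrow$ (b) $\Rightarrow$ (c) $\Rightarrow$ (a) via graded Noether normalization together with the fact that a maximal Cohen--Macaulay finitely generated graded module over a polynomial ring is free (equivalently, graded Auslander--Buchsbaum) is exactly how Benson proves it, so there is nothing to contrast.
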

We use these equivalances as a main tool to prove Cohen-Macaulayness of a cohomology ring.

\subsection{Cohomology ring of $(SL_2)_r$}\label{SL_2}
Assume only in this part that $G=SL_2$. We prove that the cohomology ring $\opH^\bullet(G_r,k)$ is Cohen-Macaulay for each $r\ge 1$. This significantly improves a result of the author in \cite{Ngo:2013} where he showed that the commutative ring $\opH^{2\bullet}(G_r,k)_{\red}$ is Cohen-Macaulay. We first need a lemma.

\begin{lemma}\label{Cohen-Macaulay of induced module}
Let $S$ be a $k$-algebra on which $B$ acts as algebra automorphisms\footnote{We usually call such $S$ a $B$-algebra}. Suppose that $U$ acts trivially on $S$, under the action of $B$, and $S$ is regular as a commutative ring. Then the ring $\ind_B^GS$ is Cohen-Macaulay.
\end{lemma}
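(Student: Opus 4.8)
The plan is to analyze the structure of $\ind_B^G S$ by passing through the quotient $G/U$ and using the geometry of the flag variety $G/B$. Since $U$ acts trivially on $S$, the $B$-algebra structure on $S$ factors through the torus $T = B/U$, so $S$ is really a $T$-algebra. First I would write $\ind_B^G S = (k[G]\otimes S)^B$ and observe that, because $U$ acts trivially on $S$, this equals $(k[G/U]\otimes S)^T$ where $T$ acts on $k[G/U]$ via the right translation action inherited from $B/U$ and on $S$ via its given $T$-action. The quotient $G/U$ is a quasi-affine variety whose coordinate ring $k[G/U]$ is the direct sum $\bigoplus_{\lambda\in X(T)^+}\opH^0(\lambda)$, and it carries a free $T$-action; in fact $k[G/U]$ is known to be a \emph{regular} (indeed a unique factorization domain, since $G$ is simply connected) ring.

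The key step is then to recognize $\ind_B^G S$ as the ring of global functions on the fibre product $(G/U)\times_T \spec S$, i.e. as an associated fibre bundle $(G/U)\times^T \spec S$ over $G/B$ with fibre $\spec S$. Since $T$ acts freely on $G/U$ and $S$ is regular, the total space of this bundle is a regular variety: locally on $G/B$ the bundle is a product of an open subset of the affine space (a piece of $G/U$) with the regular affine variety $\spec S$, and a product of regular varieties is regular. Because $G/B$ is projective, the total space of the bundle is a variety proper over $\spec k[G/B]=\spec k$ in the fibre directions only after taking global sections — more precisely, $\ind_B^G S$ is the ring of sections of the pushforward of $\mathcal O$ along the bundle projection to $\spec k$, and properness of $G/B$ guarantees this is a finitely generated $k$-algebra. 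The upshot I want is: $\ind_B^G S$ is the ring of global sections of a sheaf of regular rings over a projective base, and such a ring is Cohen-Macaulay.

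To make the last implication precise I would invoke a Serre-type / Cohen--Macaulayness criterion: if $Y$ is a (regular, hence Cohen--Macaulay) variety admitting a proper morphism $\pi: Y\to \spec R$ with $R = \Gamma(Y,\mathcal O_Y)$ and the fibres equidimensional, then $R$ is Cohen--Macaulay; alternatively, one can use the graded structure coming from the $T$-weight grading (which is why $\ind_B^G S$ is a graded commutative ring in the first place) together with the fact that $k[G/U]$ is Cohen--Macaulay and $S$ is regular, and argue that the invariant ring $(k[G/U]\otimes S)^T$ inherits Cohen--Macaulayness because the $T$-action is ``free enough'' (the unstable locus has large codimension, so one is in the situation where invariants of a Cohen--Macaulay ring under a torus acting with finite stabilizers in codimension are Cohen--Macaulay). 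I would cite the relevant statement from the literature on cohomology and good filtrations (e.g. the fact that $k[G/U]$ has a good filtration and is Cohen--Macaulay, as in work of Kempf, or \cite{Jan:2003}) rather than reprove it.

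The main obstacle will be the final ring-theoretic step: passing from ``regularity of the fibre and good behaviour of the base/group action'' to ``Cohen--Macaulayness of the ring of invariants/sections.'' Torus invariants of a regular ring need not be Cohen--Macaulay in general, so I must use the specific feature that $k[G/U]$ is itself Cohen--Macaulay (this is the substantive input) and that tensoring a Cohen--Macaulay ring with a regular ring preserves Cohen--Macaulayness, combined with a codimension estimate on the locus where the $T$-action degenerates so that the Hochster--Eagon / Boutot-style descent of Cohen--Macaulayness to invariants applies. I would isolate this as the heart of the argument and be careful to state exactly which freeness/codimension hypothesis on the $T$-action on $G/U$ I am using; everything else (the identification $\ind_B^G S \cong \Gamma$ of a fibre bundle, regularity of the total space) is formal.
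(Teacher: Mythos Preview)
Your reduction $\ind_B^G S \cong (k[G]^U\otimes S)^{T}$ is exactly the paper's first step. After that the paper finishes in two lines, using the standing hypothesis of the subsection that $G=SL_2$: for $SL_2$ the ring $k[G]^U$ is a polynomial ring in two variables, hence regular; the tensor product of two regular $k$-algebras is regular; and since a torus is linearly reductive in every characteristic, the Hochster--Roberts theorem applies directly to give that $(k[G]^U\otimes S)^{T}$ is Cohen--Macaulay.

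Your long detour through associated bundles over $G/B$, codimension estimates, and Boutot-type descent is driven by a misconception. You write that ``torus invariants of a regular ring need not be Cohen--Macaulay in general,'' but that is false: Hochster--Roberts covers linearly reductive groups, and tori are linearly reductive in all characteristics. (The caveat in the paper's subsequent Remark is about torus invariants of a merely \emph{Cohen--Macaulay} ring --- that can indeed fail --- not of a regular one.) Once this is corrected, your entire second half collapses to the one-line invocation of Hochster--Roberts that the paper uses, and the vaguer assertions you fall back on (``ring of global sections of a sheaf of regular rings over a projective base is Cohen--Macaulay'', or invariants being CM because the action is ``free enough'') are neither needed nor, as stated, justified.

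If you were trying to prove the lemma for general $G$ rather than $SL_2$, note that $k[G/U]$ is typically \emph{not} regular (being a UFD for simply connected $G$ does not imply regularity; the affine closure of $G/U$ acquires a singular boundary stratum in higher rank), so the paper's argument genuinely relies on $G=SL_2$ and your more geometric outline would need real work at exactly the point you flagged as the obstacle.
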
 

\begin{proof}
We have 
\begin{align*}
 \ind_B^GS &=\left(k[G]\otimes S\right)^B \\
&\cong \left[(k[G]\otimes S)^U\right]^{B/U}\\
&\cong \left(k[G]^U\otimes S\right)^{T}
\end{align*}
Now it is not hard to compute that $k[G]^U$ is in fact a polynomial ring over 2 variables, see \cite[2.1]{Po:1987}, so that it is a regular ring. As tensoring preserves regularity, we get $k[G]^U\otimes S$ is regular. Now since $T$ is linearly reductive, the main result of Hochster-Robert in \cite{HR:1974} implies that the invariant ring $[k[G]^U\otimes S]^{T}$ is Cohen-Macaulay; hence completing our proof.  
\end{proof}

\begin{remark}
The Lemma \ref{Cohen-Macaulay of induced module} would not hold if the ring $S$ was just Cohen-Macaulay. In fact, Hochster gave an explicit example in \cite[p.~900]{Hoch:1978} for a more general fact, that is, the invariant subring of a Cohen-Macaulay ring under a torus action is not Cohen-Macaulay. As a consequence, it is not true in general that the ring $\ind_B^GR$ is Cohen-Macaulay provided that $R$ is so. In other words, this provides a counter example for Conjecture 7.2.1 in an earlier paper of the author \cite{Ngo:2013}. It remains interesting to know under what conditions the conjecture holds.   
\end{remark}

 Now we can tackle the Cohen-Macaulayness of the $G_r$-cohomology ring.

\begin{theorem}\label{Cohen-Macaulay for SL_2}
For each $r\ge 1$, the ring $\opH^\bullet(G_r,k)$ is Cohen-Macaulay, when $G=SL_2$.
\end{theorem}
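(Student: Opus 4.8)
The plan is to identify the cohomology ring $\opH^\bullet(G_r,k)$ for $G=SL_2$ with an induced ring of the shape treated in Lemma \ref{Cohen-Macaulay of induced module}, and then invoke that lemma. First I would recall the standard description of $\opH^\bullet((SL_2)_r,k)$: by the work of Friedlander--Parshall and Andersen--Jantzen in the rank-one case (and the general machinery of \cite{SFB:1997}), the even cohomology ring $\opH^{2\bullet}(G_r,k)$ is a polynomial ring $k[y_1,\dots,y_r]$ on generators $y_i$ of degree $2p^{i-1}$ (these are the ``Frobenius'' classes), and the full cohomology ring is $\opH^\bullet(G_r,k)\cong k[y_1,\dots,y_r]\otimes\Lambda(\lambda_1,\dots,\lambda_{r-1})$ or a small variant thereof with the odd part an exterior algebra on classes in odd degree; more precisely one should pin down, from \cite{Ngo:2013} or a direct computation, the precise $B$-module structure on these generators. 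The essential point is that as a $G$-equivariant object, $\opH^\bullet(G_r,k)$ arises as $\ind_B^G S$ for an explicit finite ring extension $S$ of the subring generated by the $y_i$'s.

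Next I would set up the induction-from-$B$ picture carefully. Using the Hochschild--Serre-type analysis of $\opH^\bullet(G_r,k)$ via $B_r\subset G_r$ (as in \cite{BNP:2004}, \cite{Ngo:2013}), one has $\opH^\bullet(G_r,k)=\opH^0(G/B,\opH^\bullet(B_r,k)\,\widetilde{\phantom{x}})$ in good cases, i.e. $\opH^\bullet(G_r,k)\cong\ind_B^G\opH^\bullet(B_r,k)$, with the higher derived functors vanishing for $SL_2$. So I would take $S=\opH^\bullet(B_r,k)$ and check the three hypotheses of Lemma \ref{Cohen-Macaulay of induced module}: that $S$ is a $B$-algebra, that $U$ acts trivially on it, and that $S$ is regular as a commutative ring. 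The $B$-algebra structure is automatic from functoriality. For $SL_2$, $B_r=U_r\rtimes T_r$ and $\opH^\bullet(B_r,k)\cong\opH^\bullet(U_r,k)^{T_r}$; one computes that $\opH^\bullet(U_r,k)$ (with $U\cong\mathbb{G}_a$, so $U_r\cong\mathbb{G}_{a(r)}$) is $k[z_1,\dots,z_r]\otimes\Lambda(\eta_1,\dots,\eta_r)$ and the $T_r$-invariants (equivalently the part on which $B$ factors through $T$) give a polynomial ring in $r$ variables after absorbing the exterior generators appropriately — the key regularity input. That $U$ acts trivially: since $U_r$ is normal in $B$ and $B/B_r$-action on $\opH^\bullet(B_r,k)$ is through $B/B_r\cong T/T_r$ up to Frobenius twist, the residual $U$-action is trivial, which is exactly what is needed.

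The main obstacle will be the regularity claim for $S=\opH^\bullet(B_r,k)$ — i.e. showing this graded-commutative ring, with its odd (exterior) part, is regular in the appropriate sense so that Lemma \ref{Cohen-Macaulay of induced module} applies. One has to be careful: an exterior algebra on odd generators is \emph{not} a regular commutative ring in the naive sense, so the argument must either (i) observe that in the rank-one case $\opH^\bullet(B_r,k)$ is in fact a \emph{polynomial} ring (the odd classes squaring to zero but being absorbed, or more precisely $p$ odd forces $\eta_i^2=0$ but these classes may already be expressible via the Bockstein of the $z_i$, collapsing the exterior part), hence genuinely regular; or (ii) reduce to the reduced ring and use that $\ind_B^G$ commutes suitably with passing to $\opH^{2\bullet}$. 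I expect the cleanest route is to verify directly that for $G=SL_2$ the ring $\opH^\bullet(B_r,k)$ is isomorphic to a polynomial ring $k[x_1,\dots,x_r]$ on which $U$ acts trivially and $T$ acts by characters, cite this computation (from \cite{Ngo:2013} or \cite{BNP:2004}), and then the theorem follows immediately from Lemma \ref{Cohen-Macaulay of induced module}. A secondary technical point to address is the vanishing $R^i\ind_B^G\opH^\bullet(B_r,k)=0$ for $i>0$, which for $SL_2$ follows from the fact that the relevant $B$-modules have a good filtration (Kempf vanishing), so that $\opH^\bullet(G_r,k)=\ind_B^G\opH^\bullet(B_r,k)$ on the nose.
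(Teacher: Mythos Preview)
Your overall strategy---pass through $\ind_B^G$ applied to $\opH^\bullet(B_r,k)$ and then invoke Lemma~\ref{Cohen-Macaulay of induced module}---is the same as the paper's, but there is a genuine gap at the point you yourself flag as the ``main obstacle'': the ring $S=\opH^\bullet(B_r,k)$ is \emph{not} regular, so the lemma cannot be applied to it directly. Your route~(i) is simply false. Already for $r=1$ and $p$ odd one computes $\opH^\bullet(U_1,k)=k[x]\otimes\Lambda(\lambda)$ with $x$ in degree~$2$, $\lambda$ in degree~$1$, both of $T$-weight $-\alpha$; the $T_1$-invariants are generated by $u=x^p$ and $v=x^{p-1}\lambda$, giving $\opH^\bullet(B_1,k)\cong k[u,v]/(v^2)$. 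This has a nonzero nilpotent and is not regular. For $r\ge 2$ the odd-degree exterior classes persist in the same way; they do not ``collapse'' or get absorbed into the polynomial part. Your route~(ii)---pass to the reduced ring---only yields Cohen--Macaulayness of $\opH^{2\bullet}(G_r,k)_{\red}$, which is exactly the weaker statement the theorem is meant to improve, so it does not close the gap either.

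The missing idea, and what the paper actually does, is to \emph{decompose} rather than to hope for regularity: one shows that $\opH^\bullet(B_r,k)$ is a \emph{free} module over an honest polynomial subring $R=k[x_1^{p^{r-1}},x_2^{p^{r-2}},\ldots,x_r]$, say $\opH^\bullet(B_r,k)\cong\bigoplus_{v\in\mathfrak{B}} v\otimes R$ as $R$-modules and as $B$-modules. Since $\ind_B^G$ is exact here and preserves direct sums, one gets $\opH^\bullet(G_r,k)^{(-r)}\cong\bigoplus_v \ind_B^G(v\otimes R)^{(-r)}$, and each summand is a degree-shift of $\ind_B^G(R^{(-r)})$. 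Now $R^{(-r)}$ \emph{is} regular and $U$ acts trivially on it, so Lemma~\ref{Cohen-Macaulay of induced module} applies to $R^{(-r)}$ (not to $S$), and the full ring is a finite direct sum of shifted copies of a Cohen--Macaulay module over it, hence Cohen--Macaulay. In short: apply the lemma to the polynomial subring over which $\opH^\bullet(B_r,k)$ is free, not to $\opH^\bullet(B_r,k)$ itself.
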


\begin{proof} First recall from \cite[Corollary 4.1.2]{Ngo:2013} that the cohomology ring $\Hbul(U_r,k)$ can be considered as a free module over the polynomial ring $k[x_1,\ldots,x_r]$ where each $x_i$ is of degree 2. Then a slight modification for Theorem 6.1.2 in \cite{Ngo:2013} would give us that $\Hbul(B_r,k)$ is a free module over $R=k[x_1^{p^{r-1}},x_2^{p^{r-2}},\ldots,x_r]$. In other words, there is an isomorphism of $R$-modules as well as $B$-modules as follows
\[ \opH^\bullet(B_r,k)\cong \bigoplus_{v\in\mathfrak{B}}v\otimes R \]
where $\mathfrak{B}$ is the set of independent generators of $\Hbul(B_r,k)$ as an $R$-module.

Following the strategy in \cite[Theorem 4.3.1]{Ngo:2013}, one obtains the following isomorphism of algebras as well as $G$-modules
\[ \opH^\bullet(G_r,k)^{(-r)}\cong\bigoplus_{v\in\mathfrak{B}}\ind_B^G(v\otimes R)^{(-r)}. \]
Now it is easy to see that each graded $k$-algebra $\ind_B^G(v\otimes R)^{(-r)}$ is the graded $k$-algebra $\ind_B^G(R^{(-r)})$ shifted by $\deg(v)$. Hence it suffices to prove that the latter is Cohen-Macaulay.

As $U$ trivially acts on $\fraku^*$, it does the same on $\Hbul(U_r,k)$. Since $R\subset \Hbul(U_r,k)$, $U$ acts trivially on $R$ and so on $R^{(-r)}$. Now applying the lemma above, we get $\ind_B^G(R^{(-r)})$ is Cohen-Macaulay. Thus the theorem follows from the fact that it is the direct sum of copies of $\ind_B^G(R^{(-r)})$.
\end{proof}

The Poincar\'e series associated to the cohomology ring $\opH^\bullet(G_r,k)$ is denoted by
\[ p_{G_r}(t)=\sum_{i\ge 0}\dim\opH^i(G_r,k)t^i. \]
Next as a consequence of Theorem \ref{Cohen-Macaulay for SL_2}, one immediately has
\begin{corollary}
Suppose $G=SL_2$. For each $r\ge 1$, the Poincar\'e series $p_{G_r}(t)$ associated to $\opH^\bullet(G_r,k)$ satisfies the Poincar\'e duality, i.e. \[ p_{G_r}(1/t)=(-t)^{d}p_{G_r}(t).\]
\end{corollary}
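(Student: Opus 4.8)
\textit{Plan.} The corollary asserts a functional equation $p_{G_r}(1/t)=(-t)^{d}p_{G_r}(t)$ for the rational function $p_{G_r}(t)$, so the plan is to pin this function down and then recognise it as palindromic. By Theorem~\ref{Cohen-Macaulay for SL_2} the ring $R:=\opH^\bullet(G_r,k)$ is a finitely generated graded Cohen--Macaulay $k$-algebra, so by Proposition~\ref{definition of CM}(b) it is a finitely generated free module over a homogeneous polynomial subring $P=k[\zeta_1,\dots,\zeta_N]$; here $N$ is the Krull dimension of $R$, which for $G=SL_2$ equals $\dim C_r(\N_1)=r+1$ (by \cite{SFB:1997} together with the irreducibility of $C_r(\N)$ for $\g=\fraksl_2$ and Proposition~\ref{V_reg}), and one may take all $\zeta_i$ of even degree --- for instance the classes $x_1^{p^{r-1}},\dots,x_r$ from the proof of Theorem~\ref{Cohen-Macaulay for SL_2} account for $r$ of them. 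Then
\[
 p_{G_r}(t)=\frac{p_{\bar R}(t)}{\prod_{i=1}^{N}\bigl(1-t^{\deg\zeta_i}\bigr)},\qquad \bar R:=R/(\zeta_1,\dots,\zeta_N),
\]
with $\bar R$ a finite-dimensional graded algebra, and since every $\deg\zeta_i$ is even an elementary manipulation of rational functions shows that $p_{G_r}(1/t)=(-t)^{d}p_{G_r}(t)$ is equivalent to the palindromy $\dim\bar R_i=\dim\bar R_{s-i}$ of the Hilbert function of $\bar R$ (with $s$ its top nonzero degree and $d$ then read off from $s$ and the $\deg\zeta_i$).

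\textit{Key step.} It therefore remains to show that $\bar R$ is a graded Poincar\'e duality algebra. I see two ways to do this. The cleanest is to invoke the explicit description of $\opH^\bullet((SL_2)_r,k)$ obtained in \cite{Ngo:2013}: this exhibits $R$, hence $\bar R$, concretely enough that the palindromy (indeed Gorenstein-ness of the Artinian algebra $\bar R$) can be read off directly, and then the displayed identity gives the corollary. Alternatively one argues intrinsically, using that $G_r$ is a finite group scheme with $k[G_r]$ a symmetric Frobenius algebra: for a suitable choice of the $\zeta_i$ the iterated Carlson module $L^{\zeta_1}\otimes\cdots\otimes L^{\zeta_N}$ is self-dual, so Tate duality for $\opH^\bullet(G_r,-)$ descends to a perfect graded pairing on $\bar R\cong\opH^\bullet(G_r,\,L^{\zeta_1}\otimes\cdots\otimes L^{\zeta_N})$, which is exactly the required symmetry. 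Either route, fed back into the displayed formula, produces the functional equation; the leftover bookkeeping (the exact value of $d$ and the sign) is routine.

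\textit{Main obstacle.} The difficulty lies entirely in the palindromy of $\bar R$; Cohen--Macaulayness alone does not deliver it. Indeed the proof of Theorem~\ref{Cohen-Macaulay for SL_2} presents $\opH^\bullet(G_r,k)^{(-r)}$, as a graded module, as a direct sum of degree shifts of $\ind_B^G(R^{(-r)})=\bigl(k[G]^U\otimes R^{(-r)}\bigr)^{T}$, the ring of $T$-invariants in a polynomial ring; such torus-invariant rings are Cohen--Macaulay but typically fail to be Gorenstein (compare Hochster's example recalled after Lemma~\ref{Cohen-Macaulay of induced module}), so the symmetry of $\bar R$ must be forced by the extra structure of $G_r$-cohomology --- either the explicit $SL_2$ computation of \cite{Ngo:2013} or a duality principle for finite group schemes. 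Once $\bar R$ is known to be a Poincar\'e duality algebra, the passage back to the stated functional equation for $p_{G_r}(t)$ is immediate.
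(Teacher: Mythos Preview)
Your proposal is essentially correct in outline but takes a much longer route than the paper. The paper's proof consists of a single citation: it applies \cite[Theorem~12]{ES:1996} (the Evens--Siegel generalised Benson--Carlson duality), which states precisely that for the cohomology ring of a finite group scheme, Cohen--Macaulayness implies the functional equation $p(1/t)=(-t)^d p(t)$. Thus the only input needed beyond Theorem~\ref{Cohen-Macaulay for SL_2} is this black-box result; there is no further work.

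Your second ``intrinsic'' route---choosing a homogeneous system of parameters, forming the iterated Carlson modules, and using Tate duality for the symmetric algebra $k[G_r]$ to get a perfect pairing on $\bar R$---is exactly the machinery underlying the Benson--Carlson/Evens--Siegel theorem, so you are in effect re-deriving \cite[Theorem~12]{ES:1996} rather than citing it. Your first route (reading palindromy off the explicit description of $\opH^\bullet((SL_2)_r,k)$ in \cite{Ngo:2013}) would also succeed, but is specific to $SL_2$ and does not generalise. What your write-up buys is a self-contained explanation of \emph{why} Cohen--Macaulayness suffices here even though, as you rightly observe, a general Cohen--Macaulay graded ring need not have palindromic Hilbert series; what the paper's approach buys is brevity and a pointer to where that mechanism is proved in full generality.
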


\begin{proof}
Follows immediately from \cite[Theorem 12]{ES:1996}.
\end{proof}

\subsection{Cohen-Macaulayness of $U_r$ and $B_r$-cohomology}\label{U_r and B_r}
We are back to the assumption that $G$ is a classical simple algebraic group. We show here that the Cohen-Macaulayness of $\opH^\bullet(U_r,k)$ implies the same for $\opH^\bullet(B_r,k)$. 

\begin{theorem}
Let $G$ be a connected, reductive group and $r \ge 1$. If the algebra $\opH^\bullet(U_r,k)$ is a Cohen-Macaulay ring, then so is $\opH^\bullet(B_r,k)$.
\end{theorem}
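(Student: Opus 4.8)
The plan is to adapt the argument of Theorem~\ref{Cohen-Macaulay for SL_2} to this more general setting, the key point being that $\opH^\bullet(B_r,k)$ decomposes as a free module over a polynomial subring with $T$-action only, and that the invariant-theoretic machinery in Lemma~\ref{Cohen-Macaulay of induced module} applies verbatim once the hypothesis on $\opH^\bullet(U_r,k)$ is in place. First I would invoke the spectral sequence $\opH^\bullet(B_r/U_r, \opH^\bullet(U_r,k)) \Rightarrow \opH^\bullet(B_r,k)$ — equivalently, since $B_r/U_r \cong T_r$ and $T_r$ is linearly reductive (its cohomology in positive degrees vanishes for the trivial module, $\opH^i(T_r,k)=0$ for $i>0$ as $T_r$ is diagonalizable and $p$ is good) — to obtain $\opH^\bullet(B_r,k) \cong \opH^\bullet(U_r,k)^{T_r} = \opH^\bullet(U_r,k)^{T}$, the last equality because the residual $T/T_r \cong T$-action is what survives. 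More carefully, following the model of \cite[Theorem 6.1.2]{Ngo:2013}, one writes $\opH^\bullet(B_r,k) \cong (\opH^\bullet(U_r,k))^{T_r}$ as graded rings with residual $T$-action.

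\medskip

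Next, granting that $\opH^\bullet(U_r,k)$ is Cohen-Macaulay, I would produce a homogeneous polynomial subring $k[y_1,\ldots,y_m] \subseteq \opH^\bullet(U_r,k)$ over which $\opH^\bullet(U_r,k)$ is a finitely generated free module (Proposition~\ref{definition of CM}(b)). The subtlety is that this polynomial subring must be chosen to be $B$-stable, or at least $T$-stable, so that $T$ acts on the whole picture compatibly; since $T$ acts on the finite-dimensional space of degree-$2d$ generators (for suitable $d$) through weights and $T$ is linearly reductive, one can choose the $y_i$ to be $T$-weight vectors, and then $R := k[y_1,\ldots,y_m]$ is a $T$-stable regular subring. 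Then $\opH^\bullet(B_r,k) \cong \opH^\bullet(U_r,k)^{T_r}$ is a module over $R^{T_r}$, and since $\opH^\bullet(U_r,k)$ is free over $R$ with $T_r$ acting semisimply (as $T_r$ is linearly reductive), taking $T_r$-invariants commutes with the direct sum decomposition; combining this with Hochster--Roberts \cite{HR:1974} applied to the $T_r$-action on the regular ring $\opH^\bullet(U_r,k)$ — or rather applying it directly to the regular ring $R$ and then tracking freeness — yields that $\opH^\bullet(B_r,k)$ is Cohen-Macaulay.

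\medskip

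Cleaner still is to mimic Lemma~\ref{Cohen-Macaulay of induced module}'s proof shape directly: write $\opH^\bullet(B_r,k) \cong \opH^\bullet(U_r,k)^{T}$ (with the appropriate twist), observe $\opH^\bullet(U_r,k)$ is Cohen-Macaulay by hypothesis — but Hochster--Roberts requires the ambient ring to be regular, not merely Cohen-Macaulay, so one cannot apply it to a Cohen-Macaulay ring. Hence the genuinely correct route is the freeness argument of the previous paragraph: pick a $T$-stable homogeneous system of parameters forming a polynomial subring $R$ with $\opH^\bullet(U_r,k)$ free over $R$; since $T$ (hence $T_r$) is linearly reductive, $\opH^\bullet(U_r,k)^{T_r}$ is a direct summand, indeed free, over $R^{T_r}$, and $R^{T_r}$ is again a polynomial ring (invariants of a torus acting linearly and diagonally on a polynomial ring, with the $T_r$-action here being through a finite diagonalizable group giving a polynomial ring of invariants by a Shephard--Todd/Chevalley-type argument, or more simply because the $y_i$ can be taken to be $T_r$-eigenvectors and one passes to appropriate $p^r$-th powers as in the $SL_2$ case). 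Then $\opH^\bullet(B_r,k)$ is finitely generated and free over the polynomial ring $R^{T_r}$, so it is Cohen-Macaulay by Proposition~\ref{definition of CM}(b).

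\medskip

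\textbf{Main obstacle.} The hard part is the bookkeeping needed to ensure that the polynomial subring of $\opH^\bullet(U_r,k)$ over which it is free can be chosen $T$-equivariantly, and that its ring of $T_r$-invariants is again polynomial (rather than merely Cohen-Macaulay) — this is where the explicit structure from \cite[Corollary 4.1.2, Theorem 6.1.2]{Ngo:2013} in the $SL_2$ case was used, and for general classical $G$ one needs the analogous structural input, namely that $\opH^\bullet(U_r,k)$ carries a $T$-stable homogeneous parameter system whose $p^r$-twisted invariants remain polynomial. I expect this to require care about weights of $T$ on the generators and about which Frobenius-twisted powers to take, in the spirit of the reduction $\opH^\bullet(B_r,k) \cong \bigoplus_{v \in \mathfrak{B}} v \otimes R$ appearing in the proof of Theorem~\ref{Cohen-Macaulay for SL_2}.
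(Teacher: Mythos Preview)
Your overall strategy matches the paper's: identify $\opH^\bullet(B_r,k)\cong\opH^\bullet(U_r,k)^{T_r}$ via the collapse of the Lyndon--Hochschild--Serre spectral sequence (since $T_r$ is diagonalizable, hence linearly reductive --- no condition on $p$ is needed here), write $\opH^\bullet(U_r,k)$ as a free module over a homogeneous polynomial subring $\mathfrak{R}$, take $T_r$-invariants summand by summand, and invoke Hochster--Roberts on the regular ring $\mathfrak{R}$.

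The one substantive difference is that you make your life harder than the paper does. Your ``main obstacle'' --- arranging that $\mathfrak{R}^{T_r}$ is again \emph{polynomial} so that Proposition~\ref{definition of CM}(b) applies directly --- is something the paper simply sidesteps. The paper only uses that $\mathfrak{R}^{T_r}$ is \emph{Cohen--Macaulay}, which Hochster--Roberts gives immediately since $\mathfrak{R}$ is regular and $T_r$ is linearly reductive; then $\opH^\bullet(B_r,k)$, being a direct sum of shifted copies of $\mathfrak{R}^{T_r}$ (hence a finite free $\mathfrak{R}^{T_r}$-module), is Cohen--Macaulay. So you correctly identified the route via Hochster--Roberts in your second paragraph, and that is exactly where the paper stops; the subsequent worry about polynomiality of the invariant ring and the Shephard--Todd--type argument is unnecessary. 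Your care about choosing $\mathfrak{R}$ and the free basis $T_r$-equivariantly is a point the paper glosses over but implicitly needs (so that taking $T_r$-invariants commutes with the free decomposition), and it is handled just as you say: since $T_r$ acts diagonalizably, one can take the homogeneous system of parameters and the free basis to consist of $T_r$-weight vectors.
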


\begin{proof} By Proposition \ref{definition of CM}, suppose $\opH^\bullet(U_r,k)$ is a free module over a polynomial ring $\mathfrak{R}$. In particular, let $\mathcal{B}$ be a basis of $\opH^\bullet(U_r,k)$ over $R$, we then have
\[ \opH^{\bullet}(U_r,k)=\bigoplus_{b\in\mathcal{B}}b\cup \mathfrak{R}. \]
Now for each $r\ge 1$, we have 
\begin{align*}
\opH^\bullet(B_r,k) &\cong\opH^\bullet(U_r,k)^{T_r}\\
&\cong \bigoplus_{b\in\mathcal{B}}(b\cup \mathfrak{R})^{T_r}
\end{align*}
Each direct summand in the last item can be considered as a graded algebra $\mathfrak{R}^{T_r}$ shifted by $\deg(b)$. On the other hand, $\mathfrak{R}^{T_r}$ is Cohen-Macaulay by the main theorem of Hochster-Roberts in \cite{HR:1974}. So $\opH^\bullet(B_r,k)$ is a direct sum of Cohen-Macaulay rings; hence it is so. 
\end{proof}

\subsection{} Now we apply the properties of commuting varieties to show that both cohomology rings of $B_r$ and $G_r$ are not Cohen-Macaulay in general. In particular, we have the following

\begin{proposition}
Suppose $p\ge h$. Then the rings $\opH^\bullet(U_r,k)$ and $\Hbul(B_r,k)$ are not  Cohen-Macaulay for the values of $r$ in Proposition \ref{C_r(u)_not_equidimensional}. Moreover, the cohomology ring $\opH^\bullet(G_r,k)$ is not Cohen-Macaulay for the values of $r$ in Corollary \ref{C_r(N)_not_equidimensional} 
\end{proposition}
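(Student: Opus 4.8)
The plan is to exploit the identification of $\Maxspec\Hbul(G_r,k)$ with the commuting variety $C_r(\N_1)$ due to Suslin--Friedlander--Bendel, together with the analogous statements for $B_r$ and $U_r$ cohomology, and to deduce non-Cohen-Macaulayness from non-equidimensionality of these varieties. Recall that a Cohen-Macaulay (finitely generated, graded commutative) $k$-algebra $R$ is equidimensional: indeed, by Proposition \ref{definition of CM}, $R$ is a finite free module over a homogeneous polynomial subring $k[x_1,\dots,x_d]$, so $R$ is free hence unmixed over a regular ring, and every minimal prime of $R$ lies over $(0)$, forcing $\dim(R/\frakp)=d$ for all minimal primes $\frakp$. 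Thus $\Maxspec R$ is equidimensional of dimension $d$. Consequently, if $\Maxspec R$ is known to be a \emph{non}-equidimensional variety, then $R$ cannot be Cohen-Macaulay.

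Next I would assemble the geometric identifications. Since $p\ge h$, we have $\N_1=\N$ and $\fraku_1=\fraku$, so by \cite{SFB:1997} (and its refinements for the Borel and unipotent cases, cf.\ \cite{Jan:2003}) the maximal ideal spectrum of $\Hbul(G_r,k)$ is homeomorphic to $C_r(\N_1)=C_r(\N)$, that of $\Hbul(B_r,k)$ to $C_r(\fraku_1)=C_r(\fraku)$, and that of $\Hbul(U_r,k)$ likewise to $C_r(\fraku)$. (One also has the even cohomology ring $\opH^{2\bullet}(G_r,k)$, with the same reduced spectrum; passing to $\Hbul$ versus $\opH^{2\bullet}$ or to the reduced ring does not affect equidimensionality of the underlying topological space.) Now invoke Proposition \ref{C_r(u)_not_equidimensional}: for the stated range of $r$, the variety $C_r(\fraku)$ is not equidimensional, since $\dim\frakw^r=r\dim\frakw$ exceeds $\dim\fraku+(r-1)\ell=\dim V_{\reg}$ while $\overline{B\cdot(x_{\reg},z_{\reg},\dots,z_{\reg})}$ remains an irreducible component of that smaller dimension (Proposition \ref{U_reg}). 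Hence $\Hbul(U_r,k)$ and $\Hbul(B_r,k)$ have non-equidimensional maximal spectra, so by the preceding paragraph neither is Cohen-Macaulay. Similarly, for the values of $r$ in Corollary \ref{C_r(N)_not_equidimensional} one has $\dim\frakV_r>\dim V_{\reg}$ with $V_{\reg}$ still a component of $C_r(\N)$ by Proposition \ref{V_reg}, so $C_r(\N)$ is not equidimensional and $\Hbul(G_r,k)$ is not Cohen-Macaulay.

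The one technical point that needs care — and the step I would expect to be the main obstacle — is making precise the claim that a Cohen-Macaulay graded ring has equidimensional maximal spectrum, including the case where $\Hbul(G_r,k)$ may have nilpotents (it is only graded-commutative, so odd-degree elements square to zero in odd characteristic, but in characteristic $2$ the even subring is what carries the geometry; in any case one works with $\opH^{2\bullet}(G_r,k)$ or its reduced quotient). Cohen-Macaulayness of $R$ implies Cohen-Macaulayness of $R_{\red}$ is \emph{not} automatic, so the cleanest route is: a finite free module over a polynomial ring is unmixed, and unmixedness of $R$ forces all minimal primes to have the same dimension, whence $\Maxspec R_{\red}$ — which is homeomorphic to $\Maxspec R$ and to the relevant commuting variety — is equidimensional; a variety that is provably \emph{not} equidimensional therefore cannot arise this way. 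I would state this as a short lemma (\textquotedblleft the maximal spectrum of a Cohen-Macaulay finitely generated graded commutative $k$-algebra is equidimensional\textquotedblright) and then the proposition follows by combining it with the SFB-type identifications and Propositions \ref{C_r(u)_not_equidimensional} and \ref{C_r(N)_not_equidimensional}; everything else is bookkeeping.
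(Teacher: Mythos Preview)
Your proposal is correct and follows essentially the same approach as the paper: argue by contradiction that Cohen--Macaulayness forces equidimensionality of the maximal spectrum, invoke the Suslin--Friedlander--Bendel identification of these spectra with the commuting varieties $C_r(\fraku)$ and $C_r(\N)$ (using $p\ge h$ so that $\fraku_1=\fraku$ and $\N_1=\N$), and then appeal to Proposition~\ref{C_r(u)_not_equidimensional} and Corollary~\ref{C_r(N)_not_equidimensional}. The only difference is cosmetic: the paper dispatches the implication ``Cohen--Macaulay $\Rightarrow$ equidimensional'' with a one-line citation to \cite[18.10]{E}, whereas you spell it out via Proposition~\ref{definition of CM} and discuss the nilpotent/reduction issue more carefully; your extra care there is justified but not strictly needed.
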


\begin{proof}
We suppose on the contrary that both rings $\opH^\bullet(U_r,k)$ and $\Hbul(B_r,k)$ are Cohen-Macaulay. Then they would be equidimensional (see for example \cite[18.10]{E}). Hence, the famous result of Suslin-Friedlander-Bendel (\cite{SFB:1997}) indicates that the variety 
\[ \Maxspec\opH^\bullet(U_r,k)=\Maxspec\opH^\bullet(U_r,k)=C_r(\fraku)\]
would be equidimensional. This is impossible for values of $r$ in Proposition \ref{C_r(u)_not_equidimensional}. Similarly, we obtain the result for $\Hbul(G_r,k)$. 
\end{proof}

\begin{remark}
This result provides many counter-examples for the Conjecture 7.2.2 in \cite{Ngo:2013}. Even in the case $r=2$, the cohomology rings $\opH^\bullet(U_2,k)$ and $\opH^\bullet(B_2,k)$ are also rare to be Cohen-Macaulay for the same reason. Indeed, Goodwin and R\"ohrle showed in their recent preprint \cite{GR:2012} that $C_2(\fraku)$ is not equidimensional when $\fraku$ has infinitely many $B$-orbits. Although in \cite{GR:2012} it is assumed that $k$ has characteristic zero, the result is expected to be true for $k$ of characteristics $p$ when $p$ is large enough.
\end{remark}

\section{Complexity of Frobenius kernels}\label{bounding}\label{bounding section}
Keep the assumptions and notations as in the last section, we continue applying our results in the theory of commuting varieties to study cohomology for modules over Frobenius kernels.

\subsection{Support varieties} 
Fix $r\ge 1$, suppose $M$ is a $G_r$-module. We consider $\Ext^\bullet_{G_r}(M,M)$ as a $\opH^{2\bullet}(G_r,k)$-module via Yoneda product, set $J(M)$ the annihilator ideal in $\opH^{2\bullet}(G_r,k)$ for this action. Then the support variety for $M$, denoted by $V_{G_r}(M)$, is defined as the maximal spectrum of the quotient ring $\opH^{2\bullet}(G_r,k)/J(M)$. For further details and properties of support varieties, the reader can refer to \cite[\S 2]{NPV:2002}. It was shown in \cite{SFB2:1997}  that $V_{G_r}(M)$ is a closed conical subvariety of $C_r(\N_1)$. Similarly, if $N$ is a $B_r$-module then $V_{B_r}(N)$ is a closed conical subvariety of $C_r(\fraku_1)$.

For each $G_r$-module $M$, the dimension of the support variety $V_{G_r}(M)$ is called the complexity of $M$. This notion can be interpreted as the growth rate of the series $\{\dim\Ext_{G_r}^i(M,M)\}_{i=0}^\infty$, see \cite[Theorem 2.2.2]{NPV:2002}. In particular, $c_{G_r}(k)$ is the Krull dimension of $\opH^\bullet(G_r,k)$. 

\subsection{} We begin with some lower bound for the Krull dimension of the cohomology rings of $B_r$ and $G_r$. This is an immediate consequence of Corollaries \ref{dim C_r(u_1)}, \ref{lower bound for C_r(N_1)}, and Proposition \ref{p=2, type A}.
\begin{theorem}\label{bounding}
For each $r\ge 1$, we have
\[ \dim\Hbul(U_r,k)=\dim\Hbul(B_r,k)\ge 
\begin{cases}
r\lfloor \frac{n^2}{4}\rfloor~~&\text{if}~~\g=\fraksl_{n},\\
r\frac{\ell^2+\ell}{2}~~&\text{if}~~\g=\mathfrak{sp}_{2\ell},\\
r\frac{\ell^2-\ell}{2}~~~~&\text{else},\\
\end{cases}
\] and
\[ c_{G_r}(k)=\dim\Hbul(G_r,k)\ge \begin{cases}
(r+1)\lfloor \frac{n^2}{4}\rfloor~~&\text{if}~~\g=\fraksl_{n},\\
(r+1)\frac{\ell^2+\ell}{2}~~&\text{if}~~\g=\mathfrak{sp}_{2\ell},\\
(r+1)\frac{\ell^2-\ell}{2}~~~~&\text{else}.\\
\end{cases}
\]
The equalities occur when $G$ is of type $A$ and $p=2$.
\end{theorem}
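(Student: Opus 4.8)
The plan is to combine the geometric lower bounds on commuting varieties established in Section \ref{commuting vars} with the Suslin--Friedlander--Bendel identification \cite{SFB:1997} of $\Maxspec \Hbul(G_r,k)$ with $C_r(\N_1)$ (and its $B_r$-analogue with $C_r(\fraku_1)$). First I would recall that for any Frobenius kernel the spectrum of the (even) cohomology ring is, as a variety, $C_r(\N_1)$ for $G_r$ and $C_r(\fraku_1)$ for $B_r$, while $\dim \Hbul(U_r,k)=\dim \Hbul(B_r,k)$ since $\Hbul(U_r,k)$ is finite over $\Hbul(B_r,k)$ (the $T_r$-invariants), so Krull dimensions agree. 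Thus $\dim \Hbul(B_r,k)=\dim C_r(\fraku_1)$ and $c_{G_r}(k)=\dim \Hbul(G_r,k)=\dim C_r(\N_1)$.

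Next I would plug in the bounds already proved. Corollary \ref{dim C_r(u_1)} gives $\dim C_r(\fraku_1)\ge r\dim\frakw$, and Corollary \ref{lower bound for C_r(N_1)} gives $\dim C_r(\N_1)\ge \dim\frakV_r=(r+1)\dim\frakw$; in both, the value of $\dim\frakw$ is read off from \eqref{dimw}, which yields exactly the case-by-case expressions in the statement (note $\ell^2=\lfloor n^2/4\rfloor$ when $n=2\ell$ and $\ell(\ell+1)=\lfloor n^2/4\rfloor$ when $n=2\ell+1$, so the type $A$ cases collapse to the single formula $r\lfloor n^2/4\rfloor$ and $(r+1)\lfloor n^2/4\rfloor$). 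For the final sentence about equality when $G$ is of type $A$ and $p=2$, I would invoke Proposition \ref{p=2, type A}, which says $\dim C_r(\N_1)=(r+1)\lfloor n^2/4\rfloor$, together with Lemma \ref{C_r(O_2capu)}: since $\N_1=\frakO_2$ when $p=2$, that lemma gives $\dim C_r(\fraku_1)=\dim C_r(\frakO_2\cap\fraku)=r\dim\frakw=r\lfloor n^2/4\rfloor$.

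The proof is essentially a transcription exercise, so there is no serious obstacle; the only point requiring care is the equality $\dim\Hbul(U_r,k)=\dim\Hbul(B_r,k)$, which should be justified by noting that $\Hbul(U_r,k)$ is a finitely generated module over $\Hbul(B_r,k)=\Hbul(U_r,k)^{T_r}$ (as $T_r$ is finite), whence the two rings have the same Krull dimension; alternatively one cites that $\Maxspec$ of both equals $C_r(\fraku_1)$ up to a finite morphism. I would write the argument as: apply \cite{SFB:1997} to translate each Krull dimension into the dimension of the corresponding commuting variety, then substitute Corollary \ref{dim C_r(u_1)}, Corollary \ref{lower bound for C_r(N_1)}, and \eqref{dimw}; finally cite Proposition \ref{p=2, type A} and Lemma \ref{C_r(O_2capu)} for the equality case. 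No new computation is needed beyond checking that the displayed case distinctions match \eqref{dimw}, which is immediate.
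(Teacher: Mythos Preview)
Your proposal is correct and follows the same route as the paper, which simply states that the theorem is an immediate consequence of Corollaries \ref{dim C_r(u_1)} and \ref{lower bound for C_r(N_1)} together with Proposition \ref{p=2, type A}. You supply a bit more detail than the paper does: you make explicit the Suslin--Friedlander--Bendel identification, you justify the equality $\dim\Hbul(U_r,k)=\dim\Hbul(B_r,k)$, and you correctly note that Lemma \ref{C_r(O_2capu)} (rather than Proposition \ref{p=2, type A} alone) is what gives the $U_r/B_r$ equality when $p=2$; these are welcome clarifications but do not change the argument.
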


\subsection{Complexity for modules over Frobenius kernels} We investigate in this part some properties of the complexity for the simple module $L(\lambda)$ over $G_r$. First we need to set up some more notation in representation theory of $G$. Suppose $\Phi$ is the root system of $G$ and $\Pi$ is the set of simple roots in $\Phi$. Let $X$ be the weight lattice of $G$. Define 
\[ X^+=\{\lambda\in X: (\lambda,\alpha^\vee)\ge 0,~\text{for all~}\alpha\in\Pi \},\]
and 
\[ X_1=\{\lambda\in X^+: 0\le (\lambda,\alpha^\vee)<p,~\text{for all~}\alpha\in\Phi^+ \}.\]
We call them the set of dominant weights in $X$, and the set of $p$-restricted dominant weights in $X^+$. All simple modules of $G$ are $L(\lambda)$ with the highest weight $\lambda\in X^+$. Let $c=(\frac{\ell+1}{2})^2$ $\left(\text{resp.~} \frac{\ell(\ell+1)}{2}, \frac{\ell^2}{2} \text{~or~} \frac{\ell(\ell-1)}{2}\right)$ if $G$ is of type $A_{\ell}$ (resp. $B_{\ell}, C_{\ell}$ or $D_{\ell}$). Now combining results of Sobaje and those in commuting varieties, we have the following

\begin{corollary}
Let $\lambda\in X_1$. Suppose $p>hc$. Then one has for each $r\ge 1$
\[ c_{G_r}(L(\lambda))\ge \begin{cases}
r\lfloor \frac{n^2}{4}\rfloor~~&\text{if}~~\g=\fraksl_{n},\\
r\frac{\ell^2+\ell}{2}~~&\text{if}~~\g=\mathfrak{sp}_{2\ell},\\
r\frac{\ell^2-\ell}{2}~~~~&\text{else}.\\
\end{cases}\]
\end{corollary}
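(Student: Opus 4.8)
The plan is to reduce the statement about $c_{G_r}(L(\lambda))$ to the lower bound for $c_{G_r}(k)$ already established in Theorem \ref{bounding}, using a result of Sobaje that relates the support variety of a simple module $L(\lambda)$ with $p$-restricted highest weight to the whole cohomological support variety $V_{G_r}(k)=\Maxspec\opH^{2\bullet}(G_r,k)$, which is identified with $C_r(\N_1)$ by Suslin--Friedlander--Bendel. More precisely, I expect the relevant input to be of the form: when $p$ is large enough relative to the weight (this is where the hypothesis $p>hc$ enters, with $c$ as defined just before the statement), one has $V_{G_r}(L(\lambda))\supseteq \frakV_r$, or at least $\dim V_{G_r}(L(\lambda))\ge \dim\frakV_r$. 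Granting such an inclusion, the corollary follows immediately from Corollary \ref{lower bound for C_r(N_1)} together with the computation of $\dim\frakV_r$ in Proposition \ref{dimV_reg}, since by definition $c_{G_r}(L(\lambda))=\dim V_{G_r}(L(\lambda))$.

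First I would recall the precise statement of Sobaje's theorem on support varieties of simple modules for $G_r$: roughly, for $\lambda\in X_1$ and $p$ sufficiently large, $L(\lambda)$ restricted to $G_r$ behaves, for the purpose of support varieties, like the trivial module on the ``generic'' part of $C_r(\N_1)$, so that the nilpotent commuting subvariety $\frakV_r=G\cdot\frakw^r$ (built from the square-zero abelian subalgebra $\frakw$) is contained in $V_{G_r}(L(\lambda))$. The key point is that every commuting $r$-tuple in $\frakw^r$, being square zero, lifts to a $1$-parameter-type family on which the relevant higher cohomology of $L(\lambda)$ is detected; the condition $p>hc$ guarantees the weight $\lambda$ is small enough that the restriction of $L(\lambda)$ along these infinitesimal one-parameter subgroups does not become projective. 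I would cite the appropriate result of Sobaje (the paper is referenced in the surrounding text as ``results of Sobaje'') for this containment and not reprove it.

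Second, having the inclusion $\frakV_r\subseteq V_{G_r}(L(\lambda))\subseteq C_r(\N_1)$, I would take dimensions: $c_{G_r}(L(\lambda))=\dim V_{G_r}(L(\lambda))\ge\dim\frakV_r$. Then I would substitute the value of $\dim\frakV_r$ from Proposition \ref{dimV_reg}, namely $(r+1)\dim\frakw$, and rewrite $(r+1)\dim\frakw$ in the case-by-case form using \eqref{dimw}; note that for type $A_\ell$ with $n=\ell+1$ one has $(r+1)\dim\frakw=(r+1)\lfloor n^2/4\rfloor$, matching Theorem \ref{bounding}, but the stated bound here is the weaker $r\lfloor n^2/4\rfloor$ (resp. $r\frac{\ell^2\pm\ell}{2}$), so there is slack and I would simply observe $(r+1)\dim\frakw\ge r\dim\frakw$ gives the claimed inequality; alternatively the factor $r$ rather than $r+1$ reflects that Sobaje's theorem may only yield a subvariety of dimension $r\dim\frakw$ coming from $\frakw^r$ without the extra orbit direction, in which case I would instead invoke Corollary \ref{dim C_r(u_1)} (the bound $\dim C_r(\fraku_1)\ge r\dim\frakw$) transported to the $G_r$ setting, and deduce $\dim V_{G_r}(L(\lambda))\ge r\dim\frakw$ from the inclusion $V_{B_r}(L(\lambda))\subseteq V_{G_r}(L(\lambda))$ restricted to $C_r(\fraku_1)$.

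The main obstacle, and the step I would spend the most care on, is pinning down exactly which version of Sobaje's result applies and under precisely what hypothesis on $p$, since the clean reduction above hinges entirely on an inclusion of the form $\frakV_r\subseteq V_{G_r}(L(\lambda))$ (or its $B_r$-analogue). The quantity $c=(\tfrac{\ell+1}{2})^2$ etc.\ is tuned so that $p>hc$ forces $\lambda$ into the range where the simple module is not ``too projective'' along the commuting tuples coming from $\frakw$; I would need to check that $\frakw$ (equivalently, the representatives $x_\frakw$ with partition $[2^s,1^t]$ used in Proposition \ref{dimV_reg}) lies in the locus where Sobaje's detection theorem is valid for all $\lambda\in X_1$. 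Everything after that inclusion is bookkeeping with the dimension formulas already proved. I do not anticipate any difficulty in the dimension arithmetic itself.
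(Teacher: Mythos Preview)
Your overall strategy is right: the proof is indeed a one-line reduction via a result of Sobaje together with the lower bound already proved for $c_{G_r}(k)$. However, you have misidentified the content of the Sobaje input, and your two speculative routes (containment of $\frakV_r$ in $V_{G_r}(L(\lambda))$, or a $B_r$-argument via $\frakw^r$) are not what is used.

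The result actually invoked is Sobaje's Proposition~3.1, which for $\lambda\in X_1$ and $p>hc$ gives the inclusion
\[
V_{G_r}(L(\lambda))\ \supseteq\ V_{G_{r-1}}(k).
\]
Taking dimensions yields $c_{G_r}(L(\lambda))\ge c_{G_{r-1}}(k)$, and then Theorem~\ref{bounding} applied with $r-1$ in place of $r$ gives
\[
c_{G_{r-1}}(k)\ \ge\ ((r-1)+1)\dim\frakw\ =\ r\dim\frakw,
\]
which is exactly the stated bound. This is why the factor is $r$ rather than $r+1$: the drop by one comes from passing from $G_r$ to $G_{r-1}$ in Sobaje's inclusion, not from ``slack'' in the inequality or from using only $\frakw^r$ instead of $\frakV_r$. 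Your proposed containment $\frakV_r\subseteq V_{G_r}(L(\lambda))$ is stronger than what Sobaje provides and is not what is being claimed; likewise the $B_r$/$\fraku_1$ detour is unnecessary. Once you plug in the correct form of Sobaje's result, the remaining arithmetic is exactly the bookkeeping you describe.
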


\begin{proof}
Proposition 3.1 in \cite{So:2012} gives us
\[ V_{G_r}(L(\lambda))\supseteq V_{G_{r-1}}(k) \]
so that $c_{G_r}(L(\lambda))\ge c_{G_{r-1}}(k)$. The last inequality and Theorem \ref{bounding} prove our result.
\end{proof}

Regarding upper bounds for $c_{G_r}(k)$, it is easy to see that there is always  the following
\[ c_{G_r}(k)=\dim C_r(\N_1)\le \dim\N_1^r=r\dim\N_1=rc_{G_1}(k). \]
However, this upper bound is not useful at all when $r>1$. So finding sharper bounds for this amount could be an interesting problem. Given a finite dimensional $G_r$-module $M$, what we prove as follows is an upper bound for $c_{G_r}(M)$ in terms of $c_{B_r}(M)$.

\begin{proposition}
For each $r\ge 1$, $c_{G_r}(M)\le c_{B_r}(M)+\dim\fraku$.
\end{proposition}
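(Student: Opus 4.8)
The plan is to compare the support varieties of $M$ over $G_r$ and over $B_r$ by exploiting the parabolic induction/restriction relationship between $C_r(\N_1)$ and $C_r(\fraku_1)$, together with the standard fact that $G_r/B_r \cong G/B$ is a complete variety of dimension $\dim\fraku$. First I would recall that, by the work of Suslin--Friedlander--Bendel and Nakano--Parshall--Vella cited above, $V_{G_r}(M)$ is a closed conical $G$-stable (more precisely, it is stable under conjugation) subvariety of $C_r(\N_1)$, while $V_{B_r}(M)$ is a closed conical subvariety of $C_r(\fraku_1)$, and that restriction of cohomology along $B_r \hookrightarrow G_r$ induces the inclusion $C_r(\fraku_1) \hookrightarrow C_r(\N_1)$ on maximal spectra. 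The key compatibility, which I would invoke from \cite{NPV:2002} (or re-derive), is that $V_{B_r}(M|_{B_r})$ maps onto a subvariety of $V_{G_r}(M)$ and, conversely, $V_{G_r}(M)$ is recovered as the $G$-saturation of $V_{B_r}(M|_{B_r})$, i.e. $V_{G_r}(M) = G\cdot V_{B_r}(M|_{B_r})$.

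Granting that last identity, the proof becomes a dimension count on a saturation. Consider the map $G \times^{B} V_{B_r}(M|_{B_r}) \to V_{G_r}(M)$ given by $(g, v) \mapsto g\cdot v$; since $V_{B_r}(M|_{B_r})$ is a $B$-stable closed subvariety of $C_r(\fraku_1)$ (the $B$-action on $\fraku_1$ preserving it because $B_r$ is normal in $B$ and cohomology is functorial), this is a well-defined surjective morphism from a variety of dimension $\dim(G/B) + \dim V_{B_r}(M|_{B_r}) = \dim\fraku + c_{B_r}(M)$. Hence $c_{G_r}(M) = \dim V_{G_r}(M) \le \dim\fraku + c_{B_r}(M)$, which is exactly the claimed bound.

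The main obstacle I expect is justifying the saturation identity $V_{G_r}(M) = G\cdot V_{B_r}(M|_{B_r})$ rigorously. The containment $G\cdot V_{B_r}(M|_{B_r}) \subseteq V_{G_r}(M)$ is the easy direction (restriction maps and $G$-equivariance). The reverse containment is the subtle point: one needs that every point of the $G_r$-support of $M$, when viewed in $C_r(\N_1)$, is $G$-conjugate into $\fraku_1$ and lands in the $B_r$-support; this is the analogue of the rank-variety / ``$\Pi$-point'' description and typically follows from the fact that $C_r(\N_1) = G\cdot C_r(\fraku_1)$ together with the detection of support on a Sylow-like subgroup, i.e. the statement that $V_{G_r}(M)$ is detected on the $B_r$ after conjugating. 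I would handle this by citing the relevant detection-on-Borel result in the literature (e.g. the cohomological analogue in \cite{NPV:2002} or the support-variety functoriality in \cite{SFB2:1997}), and if a clean reference is unavailable, by the elementary argument that any closed conical $G$-stable subvariety $Z$ of $C_r(\N_1)$ satisfies $Z = G\cdot(Z\cap C_r(\fraku_1))$ because $C_r(\N_1)=G\cdot C_r(\fraku_1)$, combined with $Z\cap C_r(\fraku_1) \subseteq V_{B_r}(M|_{B_r})$ from the inclusion of annihilator ideals under restriction. Everything else is the routine fibre-dimension bound for a surjective morphism with complete-variety fibres.
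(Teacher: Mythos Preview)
Your proposal is correct and is essentially the same argument as the paper's: both rely on the saturation identity $V_{G_r}(M)=G\cdot V_{B_r}(M)$, form the associated bundle map $G\times^{B}V_{B_r}(M)\to V_{G_r}(M)$, and read off the dimension bound $\dim V_{G_r}(M)\le \dim(G/B)+\dim V_{B_r}(M)=\dim\fraku+c_{B_r}(M)$. The only difference is that you supply more justification for the saturation identity and the $B$-stability of $V_{B_r}(M)$, whereas the paper simply asserts these facts.
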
 

\begin{proof}
It suffices to prove the inequality in terms of support varieties, that is \[ \dim V_{G_r}(M)\le \dim V_{B_r}(M)+\dim\fraku.\] As the support variety $V_{B_r}(M)$ is a $B$-variety and $V_{G_r}(M)=G\cdot V_{B_r}(M)$, we then have the moment morphism $G\times^BV_{B_r}(M)\to V_{G_r}(M)$ is surjective. It follows that 
\[\dim V_{G_r}(M)\le\dim\left( G\times^BV_{B_r}(M)\right)=\dim(G/B)+\dim V_{B_r}(M)=\dim\fraku+\dim V_{B_r}(M). \] 
This proves our proposition.
\end{proof}

\begin{remark}
When $M=k$, this upper bound is sharp and the equality occurs in the case of $r=1$ and $p\ge h$. As a consequence, an upper bound of $c_{B_r}(M)$ gives that of $c_{G_r}(M)$.
\end{remark}

Finally, our explicit study for restricted nilpotent commuting varieties over rank 2 Lie algebras provides some further information about the complexity for modules over Frobenius kernels in these small rank cases. The result below follows immediately from Section \ref{rank 2 cases}.
 
\begin{corollary}
For $r\ge 1$ and $M$ (or $N$) a finite dimensional $G_r$-module (or $B_r$-module), one has
\[ c_{B_r}(N)\le 
\begin{cases}
2r+1~&\text{if}~G~\text{of~}A_2, p>2, \\
2r~&\text{if}~G~\text{of~}A_2, p=2, \\
3r~&\text{if}~G~\text{of~}~C_2, p=3,\\
\max\{2r+2,3r\}~&\text{if}~G~\text{of~}C_2, p>3,
\end{cases}
\] and 
\[ c_{G_r}(M)\le 
\begin{cases}
2r+4~&\text{if}~G~\text{of~}A_2, \\
\max\{2r+6, 3r+3\}~&\text{if}~G~\text{of~}B_2~\text{or}~C_2.
\end{cases}
\]
In particular, the equality occurs when $M=k$.
\end{corollary}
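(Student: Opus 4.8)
The plan is to reduce the statement to the dimension computations of Section~\ref{rank 2 cases}. By definition $c_{G_r}(M)=\dim V_{G_r}(M)$ and $c_{B_r}(N)=\dim V_{B_r}(N)$, and, by the results of Suslin--Friedlander--Bendel recalled above, $V_{G_r}(M)$ is a closed subvariety of $C_r(\N_1)$ and $V_{B_r}(N)$ a closed subvariety of $C_r(\fraku_1)$. Hence
\[ c_{G_r}(M)\le\dim C_r(\N_1)\le\dim C_r(\N),\qquad c_{B_r}(N)\le\dim C_r(\fraku_1), \]
and the first step is simply to record these inequalities.

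The second step is bookkeeping: one substitutes the values computed in the two corollaries of Section~\ref{rank 2 cases}, using that a simply connected group of type $B_2$ is isomorphic to one of type $C_2$. For the $B_r$-bound one reads off $\dim C_r(\fraku_1)$ directly, distinguishing $p=2$ from $p>2$ in type $A_2$ and $p=3$ from $p>3$ in type $C_2$; the displayed right-hand sides are precisely these numbers, with the subcases $r=1$ and $r\ge 2$ in type $C_2$, $p>3$ merged into $\max\{2r+2,3r\}$. For the $G_r$-bound one uses $c_{G_r}(M)\le\dim C_r(\N_1)\le\dim C_r(\N)$: in type $A_2$ the right-hand side is $2r+4$ (it equals $\dim C_r(\N_1)$ for $p>2$ and dominates $\dim C_r(\N_1)=2r+2$ for $p=2$), and in type $C_2$ it is $\max\{2r+6,3r+3\}=\dim C_r(\N)$ as established in the proof of the last corollary of Section~\ref{rank 2 cases}, hence an upper bound for every good prime.

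For the last assertion, the annihilator ideal $J(k)$ of $\Ext^\bullet_{G_r}(k,k)=\opH^\bullet(G_r,k)$ inside $\opH^{2\bullet}(G_r,k)$ is zero (since $1\in\opH^0$ generates the module freely over scalars in degree $0$), so $V_{G_r}(k)$ is all of $\Maxspec\opH^{2\bullet}(G_r,k)$, which is identified with $C_r(\N_1)$; thus $c_{G_r}(k)=\dim C_r(\N_1)$, and likewise $c_{B_r}(k)=\dim C_r(\fraku_1)$. Therefore the $B_r$-bounds are attained at $N=k$ for every prime in the list, and the $G_r$-bounds are attained at $M=k$ whenever $p\ge h$, so that $\N_1=\N$. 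There is no serious obstacle in this argument; the only care needed is the bookkeeping matching the piecewise formulas of Section~\ref{rank 2 cases} to the closed-form and $\max$ expressions in the statement, and being precise about which primes the final equality refers to.
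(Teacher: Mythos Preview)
Your proposal is correct and is exactly the argument the paper has in mind: the paper's own proof is the single sentence ``follows immediately from Section~\ref{rank 2 cases}'', and you have simply spelled out what that means---namely that $c_{G_r}(M)\le\dim C_r(\N_1)$ and $c_{B_r}(N)\le\dim C_r(\fraku_1)$ via the Suslin--Friedlander--Bendel identification, followed by plugging in the rank~2 dimension formulas. Your care in noting that the final equality claim for the $G_r$-bound is only literally true when $p\ge h$ (so that $\N_1=\N$) is in fact more precise than the paper's wording; this is a fair observation rather than a defect in your argument.
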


\begin{remark}
This result generalizes the one of Kaneda et. al. in \cite{KSTY:1990} where they established the bounds for the Krull dimension of $B_r$-cohomology ring in the case $p>2$,
\[ 2r\le \dim\opH^*(B_r,k) \le 2r+1. \]
Our result is much more powerful as it not only gives an exact formula but also include the case $p=2$.
\end{remark}

\section*{Acknowledgments}

The author would like to especially thank Dan Nakano for briefly looking over the preprint and giving insightful remarks. We also gratefully acknowledges useful discussions with Chris Bendel, Paul Levy, Mitsuyasu Hashimoto, Rolf Farnsteiner and HaiLong Dao. Thanks to Christopher Drupieski for informing the paper of Kaneda et. al. Finally, the author greatly thanks  the referee for the comments and suggestions significantly improving the writing and organization of paper.

\providecommand{\bysame}{\leavevmode\hbox to3em{\hrulefill}\thinspace}


\begin{thebibliography}{BNPP}




\bibitem[Ben]{Ben:2004}
D.~J.~Benson, {\em Commutative algebra in the cohomology of groups}, Trends in Com. Algebra, \textbf{51} (2004).


\bibitem[BNP1]{BNP:2004}
C.~P.~Bendel, D.~K.~Nakano, and C.~Pillen, {\em Extensions for frobenius kernels}, J. Algebra, \textbf{272} (2004), 476–-511.

\bibitem[BNP2]{BNP:2006}
\bysame, {\em Second cohomology groups for Frobenius kernels and related structures}, Adv. Math., \textbf{209} (2007), 162--197. 




\bibitem[CLNP]{CLNP:2003}
J.~Carlson, Z.~Lin, D.~Nakano, and B.~Parshall, {\em The restricted nullcone}, Cont. Math., \textbf{325} (2003), 51--75.

\bibitem[CM]{CM:1992}
D.~H.~ Collingwood and W.~M.~ McGovern, {\em Nilpotent Orbits In Semisimple Lie Algebra}, Van Nostrand Reinhold, 1993.





\bibitem[E]{E}
D.~Eisenbud, {\em Commutative Algebra with a view toward Algebraic Geometry}, Springer, 1995.

\bibitem[ES]{ES:1996}
L.~Evens and S.~F.~Siegel, {\em Generalized Benson-Carlson Duality}, J. Algebra, \textbf{179} (1996), 775–-792.




\bibitem[GR]{GR:2012}
S.~Goodwin and G.~R\"ohrle, {\em On commuting varieties of nilradicals of Borel subalgebras of reductive Lie algebras}, preprint.\href{http://arxiv.org/pdf/1209.1289.pdf}{http://arxiv.org/pdf/1209.1289.pdf}


\bibitem[Ho]{Hoch:1978}
M.~Hochster, {\em Some applications of the Frobenius in characteristic 0}, Bull. AMS, \textbf{58} (1978), 886--912.

\bibitem[HR]{HR:1974}
M.~Hochster and J.~Roberts, {\em Rings of invariants of reductive groups acting on regular rings are Cohen-Macaulay}, Adv. in Math., \textbf{13} (1974), 115–-175.

\bibitem[Hum]{Hum:1978}
J.~E.~Humphreys, {\em Introduction to Lie algebras and representation theory}, Graduate Texts in Mathematics, \textbf{9}, Springer-Verlag, New York, 1978, Second printing, revised.


\bibitem[Jan1]{Jan:2003}
J.C.~Jantzen, {\em Representations of algebraic groups}, Mathematical Surveys and Monographs, \textbf{107}, American Mathematical Society, Providence, RI, 2003.

\bibitem[Jan2]{Jan:2004}
\bysame, {\em Nilpotent orbits in representation theory}, Progress in Mathematics, \textbf{228}, Birkhauser, 2004.



\bibitem[KSTY]{KSTY:1990}
M.~ Kaneda, N. Shimada, M. Tezuka, and N. Yagita, {\em Cohomology of infinitesimal algebraic groups}, Math. Z., \textbf{205} (1990), 61--95.

\bibitem[L]{L:2007}
P.D.~ Levy, {\em Varieties of modules for $\mathbb{Z}_2\times\mathbb{Z}_2$}, J. Algebra \textbf{318} (2007), 933--952.

\bibitem[MT]{MT}
G.~Malle and D.~Testerman, {\em Linear algebraic groups and finite groups of Lie type}, Cambridge Studies in Advanced Mathematics, \textbf{133}, Cambridge University Press, 2011. 

\bibitem[Ngo1]{Ngo:2013}
N.V.~Ngo, {\em Cohomology for Frobenius kernels of $SL_2$}, J. Algebra, \textbf{396} (2013), 39--60.

\bibitem[Ngo2]{Ngo2:2013}
\bysame, {\em Commuting varieties of $r$-tuples over Lie algebras}, J. Pure and Appl. Alg. (2013), in press.



\bibitem[NPV]{NPV:2002}
D.K.~Nakano, B.J.~Parshall, and D.C.~Vella, {\em Support varieties for algebraic groups}, J. reine angew. Math. \textbf{547} (2002), 15--49. 

\bibitem[NS]{NS:2014}
N.V.~Ngo and K. \v{S}ivic, {On varieties of commuting nilpotent matrices}, Linear Algebra and its Applications, \textbf{452} (2014), 237--262.



\bibitem[Po]{Po:1987}
K.~ Pommerening, {\em Invariants of unipotent groups--a survey}, Lecture Notes in Math, \textbf{1278}, Springer-Verlag, Berlin, 1987, 8--17.

\bibitem[Pr]{Pr:2003}
A. Premet, {\em Nilpotent commuting varieties of reductive Lie algebras}, Invent. Math., \textbf{154} (2003), 653--683.
￼



\bibitem[SFB1]{SFB:1997}
A.~Suslin, E.~Friedlander, and C.~Bendel, {\em Infinitesimal 1-parameter subgroups and cohomology}, J.
Amer. Math. Soc, \textbf{10} (1997), 693-–728.

\bibitem[SFB2]{SFB2:1997}
\bysame, {\em Support varieties for infinitesimal group scheme}, J. Amer. Math. Soc, \textbf{10} (1997), 729-–759.

\bibitem[So]{So:2012}
P.~Sobaje, {\em Support varieties for Frobenius kernels of classical groups}, J. Pure and Appl. Algebra, \textbf{216} (2012),  2657--2664.

\bibitem[UGA]{UGA}
UGA VIGRE Algebra Group, {\em Varieties of nilpotent elements for simple Lie algebras I: Good primes}, J. of Algebra, \textbf{280} (2004), 719--737.


\end{thebibliography}
\end{document}